\newtheorem{theorem}{Theorem}[section]
\newtheorem{prop}[theorem]{Proposition}
\newtheorem{cor}[theorem]{Corollary}
\Crefname{cor}{Corollary}{Corollaries}
\newtheorem{lemma}[theorem]{Lemma}
\theoremstyle{definition}
\newtheorem{defin}[theorem]{Definition}
\newtheorem{remark}[theorem]{Remark}
\Crefname{algo}{Algorithm}{Algorithms}
\def\cC{\mathcal C}
\def\cH{\mathcal H}
\def\cT{\mathcal T}
\def\cP{\mathcal P}
\def\Z{\mathbb Z}
\def\PG{\mathrm{PG}}
\def\Fq{\mathbb{F}_q}
\DeclareRobustCommand{\ZpZ}[1]{\ifthenelse{\equal{#1}{1}}{\Z/p\Z}{{(\Z/p\Z)^{#1}}}}
\newcommand{\qbinom}[2]{\left[{{#1}\atop#2}\right]_q}
\def\PGO+{\mathrm{PGO}^+}
\def\PGO-{\mathrm{PGO}^-}
\def\PGO{\mathrm{PGO}}
\title{Counting substructures in finite geometries}
\author{Sam Mattheus, Geertrui Van de Voorde}
\date{}
\begin{document}

\maketitle

\begin{abstract}We use techniques from algebraic and extremal combinatorics to derive upper bounds on the number of independent sets in several (hyper)graphs arising from finite geometry. In this way, we obtain asymptotically sharp upper bounds for partial ovoids and EKR-sets of flags in polar spaces, line spreads in $\PG(2r-1,q)$ and plane spreads in $\PG(5,q)$, and caps in $\PG(3,q)$. The latter result extends work due to Roche-Newton and Warren \cite{Roche-Newton/Warren} and Bhowmick and Roche-Newton \cite{Bhowmick/Roche-Newton}. \\Finally, we investigate caps in $p$-random subsets of $\PG(r,q)$, which parallels recent work for arcs in projective planes by Bhowmick and Roche-Newton, and Roche-Newton and Warren \cite{Bhowmick/Roche-Newton,Roche-Newton/Warren}, and arcs in projective spaces by Chen, Liu, Nie and Zeng \cite{CLNZ}. \end{abstract}

\section{Introduction}
A classical problem in finite geometry is the study of interesting substructures such as {\em arcs, caps, spreads,} etc.\ Traditionally, the focus of this research is on constructing, characterising and classifying those objects. Less attention is paid to {\em counting} the number of such substructures. There are a few such results known though: an example of research in this direction is Kantor's count of commutative semifield planes of even order. It should be noted that in this result, the number of such spreads are counted {\em up to isomorphism}. The main result shows that (under some mild hypotheses on $q$ and $m$) the number of such planes of even order $q^m$ is at least $(q^{m(\phi(m)-1)})/(m\log(q))^2)$, which indicates that their classification is a hopeless task.

On the other hand, finding upper bounds on the number of certain substructures of (hyper)graphs is a very common theme in the area of extremal combinatorics. Recently, techniques from extremal combinatorics have been employed to count {\em arcs} in projective planes and higher-dimensional spaces. This problem was addressed in \cite{Bhowmick/Roche-Newton,CLNZ,MubayiVerstraete,Roche-Newton/Warren}. Note that in this case, arcs are not counted up to isomorphism, but are considered different if their point sets are different. More precisely, the result of Bhowmick and Roche-Newton \cite{Bhowmick/Roche-Newton} in the plane states the following.

\begin{theorem} \label{BRNarcs} The number of arcs in $\mathbb{F}_q^2$ is at most  $2^{(1+o(1))q}$.
\end{theorem}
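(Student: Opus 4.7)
The plan is to apply the hypergraph container method to the $3$-uniform hypergraph $H$ on vertex set $\mathbb{F}_q^2$ whose edges are the collinear triples of points; the arcs of $\mathbb{F}_q^2$ are then precisely the independent sets of $H$. A direct count via the classical bound $|A|\le q+2$ only gives $\sum_{k\le q+2}\binom{q^2}{k}=2^{(1+o(1))q\log q}$ arcs, so the task is to shave the logarithmic factor in the exponent, which is exactly the type of improvement the container method is designed to produce.

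The key step is to establish a \emph{balanced supersaturation} estimate for $H$: there is $\varepsilon=o(1)$ such that every $T\subseteq \mathbb{F}_q^2$ with $|T|\ge(1+\varepsilon)q$ carries a weighted family of collinear triples that is large and whose pairwise codegrees are uniformly controlled. The codegree side is essentially free, since any two points of $\mathbb{F}_q^2$ lie on a unique line and hence in at most $q-2$ collinear triples. The real content is the lower bound on the number of triples in $T$: if $|T|$ slightly exceeds $q$, an incidence count shows that one cannot have essentially all lines meeting $T$ in at most two points, and a convexity argument applied to the line sizes $|L\cap T|$ then forces a number of collinear triples that exceeds $|T|$ by the required super-linear factor.

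Feeding this supersaturation estimate into the Saxton--Thomason (or Balogh--Morris--Samotij) container theorem produces a family $\mathcal{C}$ of subsets of $\mathbb{F}_q^2$ such that every arc is contained in some $C\in\mathcal{C}$, every $C\in\mathcal{C}$ satisfies $|C|\le(1+o(1))q$, and $|\mathcal{C}|\le 2^{o(q)}$. The theorem then follows immediately from
\[
\#\{\text{arcs in }\mathbb{F}_q^2\}\;\le\;\sum_{C\in\mathcal{C}}2^{|C|}\;\le\;2^{o(q)}\cdot 2^{(1+o(1))q}\;=\;2^{(1+o(1))q}.
\]

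The main obstacle is the quantitative form of the supersaturation step. A generic $3$-uniform hypergraph with $H$'s degree and codegree statistics only yields containers of size $(c+o(1))q$ for some $c>1$; to recover the sharp constant $1$ one must exploit the geometry of $\mathbb{F}_q^2$ and rule out the possibility that a near-extremal set $T$ concentrates almost all of its collinear triples on a small number of lines. This incidence-geometric input, rather than the container machinery itself, is where the delicate work sits.
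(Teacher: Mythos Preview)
The paper does not give its own proof of this statement: Theorem~\ref{BRNarcs} is quoted from Bhowmick and Roche-Newton, and the paper's own container arguments for caps (Section~\ref{sec:containercaps}) are expressly restricted to $r\ge 3$, deferring the planar case to \cite{Bhowmick/Roche-Newton,Roche-Newton/Warren}. Your sketch accurately describes the method of those references --- collinear-triple hypergraph, supersaturation from line-incidence convexity, iterated containers, and the final count --- and it is exactly the template the paper follows for caps in $\PG(3,q)$ (Lemma~\ref{lem:manytriples}, Theorem~\ref{thm:containersHq}, Theorem~\ref{thm:containersHq2}, Corollary~\ref{cor:countcaps}).

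One comment on your closing paragraph. You suggest that reaching the sharp constant $1$ (containers of size $(1+o(1))q$ rather than $(c+o(1))q$) requires delicate incidence geometry to rule out concentration of collinear triples on few lines. In this setting that worry is overstated. The codegree condition is automatic ($\Delta_2\le q$ uniformly), so no ``balanced'' or weighted supersaturation is needed; the ordinary count $\sum_\ell\binom{|T\cap\ell|}{2}$ handled by Jensen --- precisely the planar instance of Lemma~\ref{lem:manytriples} --- already shows that any $T$ of size $k(q+1)$ spans $\Omega(k^2(k-1)q^2)$ collinear triples. As the paper does for $r=3$ in Theorem~\ref{thm:containersHq2}, one then applies the container lemma once more to each first-pass container of size $O(q)$, and this supersaturation alone is enough to shrink it to size $(1+\gamma)(q+1)$ while multiplying the container count by only $2^{o(q)}$. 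No structural information beyond the convexity estimate is required to obtain the leading constant $1$.
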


Observe that this asymptotically matches the trivial lower bound $2^{q+1}$, obtained by considering all possible subsets of an arc of size $q+1$, such as a non-degenerate conic. It is worth noting that the arguments of \cite{Bhowmick/Roche-Newton} to derive Theorem \ref{BRNarcs} are valid for general (Desarguesian and non-Desarguesian) affine and projective planes.
With the same techniques, they can also show that the number of arcs of a fixed size $m$ in $\mathbb{F}_q^2$ is at most ${(1+o(1))q \choose m}$ for $m \in [q^{2/3+o(1)},q]$, which again matches the trivial lower bound ${q+1 \choose m}$ asymptotically. Both of these results were recently extended to arcs in higher dimensions by Chen, Liu, Nie and Zeng \cite{CLNZ}. \\

In this paper, we will show results in the same vein for substructures different from arcs. To this end, we will apply the {\em container} method, a method developed for counting independent sets in {\em hypergraphs}. 

Recall that a hypergraph consists of a set of vertices and a set of hyperedges, where hyperedges are subsets of vertices. A hypergraph is {\em $k$-uniform} if every hyperedge has the same number $k$ of vertices; when $k=2$, a hypergraph is called a {\em graph}. An {\em independent set} in a hypergraph is a set of vertices such that no hyperedge is contained in it. The {\em independence number} of a hypergraph is the size of its largest independent set. \\

Many of the interesting substructures within finite geometries can be phrased as independent sets of a certain (hyper)graph. For example, {\em ovoids} and {\em spreads} of polar spaces are known to be independent sets in the {\em collinearity graph} or the {\em dual polar graph} of the polar space respectively. Similarly, one can observe that caps in $\PG(n,q)$ are independent sets in the $3$-uniform hypergraph whose edges are all collinear triples. These objects generalise arcs in the projective plane in a different direction as the one considered in \cite{CLNZ}. \\ 

To the best of our knowledge, this question has not been considered for any other objects in projective or polar spaces other than arcs as mentioned above. We do not aim to give a systematic treatment of every possible substructure of projective or polar spaces that has been defined in the literature, but rather show the strength of the container method for a select few instances. The main difficulty in the settings that we consider is that the largest independent sets are vanishingly small with respect to the number of vertices of the (hyper)graph under consideration. While this is easily overcome in the graph setting, as we will see in \Cref{sec:containergraphs}, we have to be more careful in the hypergraph setting in \Cref{sec:containercaps} in order to get optimal results. For this reason, we will use a rather technical version of the container lemma, as stated by Saxton and Morris \cite{ST16}. This lemma allows us to prove our results in a more straightforward way, as opposed to \cite{Bhowmick/Roche-Newton,CLNZ,Roche-Newton/Warren}, where the iteration of the container lemma has to be set up every time again.  \\

A subset of an independent set remains independent, and many interesting substructures are closed under taking subsets: we can talk about {\em partial} ovoids, partial spreads, etc. Intuitively, we would like to state a result on the number of substructures (arcs, partial spreads, etc..) as follows: let $C$ be a substructure of the largest possible size, then the number of substructures of size $t$ is at most $\binom{(1+\gamma)|C|}{t}$, where $\gamma$ is small, that is, $\gamma=o(1)$ as $q \rightarrow \infty$. These numbers should be considered as functions in terms of $q$: both the number of substructures and the (rounded) binomial coefficient are polynomials in $q$ of a certain degree, and we aim to show that the highest degree of the former is at most the highest degree of the latter. Also recall that we are not counting the number of non-isomorphic independent sets.
It thus makes sense to allow the correction factor $1+\gamma$, as we have to take into account the number of substructures of maximum size. Such a result translates to saying the number of substructures of maximum size is of a lower order than the total number of substructures of size $t$ we can construct as subsets of a fixed one of largest possible size. These counts should be considered asymptotically, and we do not round numbers that are supposed to be integers either up or down (omitting the floor or ceiling functions). This is justified since these rounding errors are negligible to the asymptotic calculations we make. \\

Finally, the authors of \cite{Bhowmick/Roche-Newton,CLNZ,Roche-Newton/Warren} also investigate the problem of finding the largest arcs in random subsets of points. To be precise, one samples points independently and uniformly at random with fixed probability $p$ and investigates how the size of the maximum arc varies as $p$ changes. This is part of a more general theme in combinatorics where classical combinatorial results are extended to sparse random settings. Such questions have received a lot of attention in the past years, with notable breakthrough results by Conlon and Gowers \cite{ConlonGowers} and Schacht \cite{Schacht}. The hypergraph container method has been highly influential with regards to this problem as well, as can already be seen from the pioneering papers by Balogh, Morris and Samotij, and Saxton and Thomason independently \cite{BMS,SaxtonThomason}.

This is no surprise as this problem is intimately related to counting independent sets by using the first moment method. For instance, if we can count the number of arcs of size $m$ in projective space, we can immediately apply the union bound and see if any survive in expectation after we sample points with probability $p$. For arcs in the projective plane in particular, we have results that are tight up to powers of logarithms for the whole range of $p$ by results of \cite{Bhowmick/Roche-Newton,CLNZ}. 

\paragraph{Organisation of the paper and main results}

This paper is organised as follows.
In \Cref{sec:containergraphs} we introduce the {\bf container method for graphs} and use it to give an asymptotically sharp count for:
\begin{itemize}
\item partial ovoids in polar spaces (\Cref{prop:partialovoids});
\item EKR-sets of flags in polar spaces (\Cref{prop:countEKRsets});
\item partial line spreads in $\PG(r-1,q)$ (\Cref{prop:countlinespreads}) and partial plane spreads in $\PG(5,q)$ (\Cref{cor:countplanespreads}).
\end{itemize}
In \Cref{sec:containercaps} we discuss the {\bf container method for 3-uniform hypergraphs} which we use to investigate the number of caps in $\PG(r,q)$. Our two main results are:
\begin{itemize}
\item bounds on the size of the largest caps in a $p$-random subsets of $\PG(r,q)$ (\Cref{thm:randomsubsets}),
\item an asymptotically sharp count for the number of caps in $\PG(3,q)$ (\Cref{cor:countcaps}). 
\end{itemize}

%
%
%

\section{The container method for graphs}\label{sec:containergraphs}

The container method for graphs dates back to the 1980s \cite{KW} and was extended to hypergraphs independently by Balogh, Morris and Samotij \cite{BMS} and Saxton and Thomason \cite{SaxtonThomason}. These results heavily influenced the field of combinatorics; the aforementioned analogues of classical combinatorial results in random settings are nowadays mainly proved using this method. In short, the container method can be used to bound the number of independent sets in a hypergraph. The container lemma ensures that, under certain conditions, there exists a relatively small collection of {\em containers}: this family of vertex subsets have the property that every independent set of the graph is contained in one of the containers. Moreover, for every container, the induced subgraph on this set has a relatively small number of edges.
As such, the number of independent sets of a certain size $m$ in the hypergraph is at most the number of containers given by the container lemma times the number of subsets of size $m$ in a container; the goal is to derive suitable choices for the parameters occuring in the conditions of the container lemma in order to find a good upper bound on the number of containers and the number of vertices in one container. \\

The container lemma for ordinary graphs reads as follows, where we use the notation $e(S)$ for the number of edges whose vertices are contained in $S$.
\begin{lemma}\label{thm:containerforgraphs} {\bf (The container lemma for graphs)} \cite[Lemma 3.1]{KLRS} 
	Let $G$ be a graph on $n$ vertices, let $f$ be an integer and let $R,\beta$ be real numbers where $0 < \beta < 1$. Then, provided that
	\begin{align}
		e^{-\beta f}n \leq R, \label{eq:graphcondition1}
	\end{align}
	and, for every subset $S \subset V(G)$ of at least $R$ vertices, we have 
	\begin{align}
		2e(S) \geq \beta|S|^2, \label{eq:graphcondition2}
	\end{align}
	the number of independent sets of size $m \geq f$, is at most
	\[\binom{n}{f}\binom{R}{m-f}.\]
\end{lemma}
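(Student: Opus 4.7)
The plan is to prove the lemma via an explicit greedy container construction. To each independent set $I$ of size $m \geq f$ I will assign a \emph{fingerprint} $T \subseteq I$ with $|T| = f$ and a \emph{container} $C = T \cup A$ with $|A| \leq R$, in such a way that $A$ depends only on $T$. The bound then follows by first counting the at most $\binom{n}{f}$ possible fingerprints and then, for each fingerprint, the at most $\binom{R}{m-f}$ ways of choosing $I \setminus T$ inside $A$.

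Fix an arbitrary total ordering of $V(G)$. Set $A_0 := V(G)$, $T_0 := \emptyset$, and iterate the following step. At stage $i \geq 1$, let $v_i$ be the vertex of maximum degree in $G[A_{i-1}]$, with ties broken by the fixed ordering. If $v_i \in I$, put $T_i := T_{i-1} \cup \{v_i\}$ and $A_i := A_{i-1} \setminus (\{v_i\} \cup N_G(v_i))$; otherwise put $T_i := T_{i-1}$ and $A_i := A_{i-1} \setminus \{v_i\}$. Halt once $|T_i| = f$, and set $T := T_i$, $A := A_i$. Two invariants matter. First, $I \subseteq T_i \cup A_i$ throughout: the independence of $I$ prevents any vertex of $I$ from being removed as a $G$-neighbor of a selected $I$-vertex, so vertices of $I$ exit $A$ only by entering $T$. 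Second, given $T$ as an unordered set, the entire execution is reconstructible, since at each stage one recomputes $v_i$ from $A_{i-1}$ and then branches according to whether $v_i \in T$. Hence $A$ is a function of $T$ alone.

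The core estimate is $|A| \leq R$ at termination. Whenever $|A_{i-1}| > R$, condition~\eqref{eq:graphcondition2} applied with $S = A_{i-1}$ gives $2e(G[A_{i-1}]) \geq \beta |A_{i-1}|^2$, so the chosen vertex $v_i$ (of maximum degree) satisfies $d_{G[A_{i-1}]}(v_i) \geq \beta |A_{i-1}|$. Consequently, each addition $v_i \in I$ made while $|A_{i-1}| > R$ shrinks $A$ by a factor of at most $1-\beta \leq e^{-\beta}$. Since $|A|$ is monotonically nonincreasing, if $|A| > R$ at termination then every one of the $f$ additions took place while $|A| > R$, yielding $|A| \leq (1-\beta)^f n \leq e^{-\beta f} n \leq R$ by~\eqref{eq:graphcondition1}, a contradiction.

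The main (mild) technical point is to verify that the algorithm actually terminates with $|T| = f$, which follows from invariant (i) combined with $|I| = m \geq f$: since vertices of $I$ only leave $A$ by being selected, $T$ keeps gaining an $I$-vertex at some future step and therefore reaches size $f$. With $|T| = f$, $|A| \leq R$, $I \subseteq T \cup A$, and $A$ determined by $T$, the stated bound $\binom{n}{f}\binom{R}{m-f}$ drops out immediately by choosing $T$ as a subset of $V(G)$ of size $f$ and then $I \setminus T$ as an $(m-f)$-subset of $A$.
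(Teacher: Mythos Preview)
The paper does not supply its own proof of this lemma; it simply quotes it from \cite[Lemma~3.1]{KLRS}. Your argument is correct and is exactly the Kleitman--Winston greedy container algorithm that underlies the cited result: iteratively peel off the maximum-degree vertex, record it as part of the fingerprint if it lies in $I$, and use condition~\eqref{eq:graphcondition2} to force a multiplicative $(1-\beta)$ shrinkage of $A$ at each of the $f$ recording steps, so that \eqref{eq:graphcondition1} yields $|A|\le R$; the fact that the final $A$ depends only on the unordered fingerprint $T$ then gives the count $\binom{n}{f}\binom{R}{m-f}$.
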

 
The main hurdle in applying the container lemma is the condition given by \Cref{eq:graphcondition2}. Typically, we will want to choose the parameter $R$ not too be much larger than $\alpha(G)$, the independence number of $G$. 

We know that as soon as a vertex set is larger than $\alpha(G)$, the induced subgraph contains at least one edge. However, it turns out that as soon as the size of a subset $S$ crosses the threshold of $\alpha(G)$, the number of edges in the induced subgraph is relatively large; we can write this number as some fixed positive proportion $\beta$ of the total number $\binom{|S|}{2}$ of possible edges of that set.
In the literature, this phenomenon is often referred to as a {\em supersaturation} result.

It is known that in the case of $d$-regular graphs, we can apply Cauchy eigenvalue interlacing \cite{Haemers}, or equivalently the expander mixing lemma \cite{AlonChung}, to prove such supersaturation statements. We will derive this result in Corollary \ref{cor:supersaturationgraphs}.

\begin{lemma}\label{lem:lowerboundedges}
	Let $G$ be a $d$-regular graph on $n$ vertices and with eigenvalues (of its adjacency matrix) $d = \lambda_1 \geq \lambda_2 \geq \dots \geq \lambda_n$. Then for any subset $S \subset V(G)$ we have
	\[2e(S) \geq \frac{d-\lambda_n}{n}|S|^2+\lambda_n|S|.\]
\end{lemma}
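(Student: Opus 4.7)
The plan is to use the standard spectral/Rayleigh quotient approach. Let $A$ be the adjacency matrix of $G$ and let $\mathbf{1}_S \in \R^n$ denote the characteristic vector of $S$. Counting ordered pairs of adjacent vertices in $S$, we have the identity
\[
2e(S) \;=\; \mathbf{1}_S^{\top} A\, \mathbf{1}_S.
\]
So the task reduces to bounding this quadratic form from below.

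Next I would exploit that $G$ is $d$-regular, which means the all-ones vector $\mathbf{1}$ is an eigenvector of $A$ with eigenvalue $\lambda_1 = d$. Decompose $\mathbf{1}_S$ orthogonally as
\[
\mathbf{1}_S \;=\; \tfrac{|S|}{n}\mathbf{1} + \mathbf{v},
\]
where $\mathbf{v} \perp \mathbf{1}$. The coefficient $|S|/n$ is forced by $\langle \mathbf{1}_S, \mathbf{1}\rangle = |S|$, and by the Pythagorean theorem
\[
\|\mathbf{v}\|^2 \;=\; \|\mathbf{1}_S\|^2 - \tfrac{|S|^2}{n} \;=\; |S| - \tfrac{|S|^2}{n}.
\]

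Now plug this decomposition into the quadratic form. The cross terms vanish because $\mathbf{v} \perp \mathbf{1}$ and $A\mathbf{1} = d\mathbf{1}$, giving
\[
\mathbf{1}_S^{\top} A\, \mathbf{1}_S \;=\; d \cdot \tfrac{|S|^2}{n} + \mathbf{v}^{\top} A\, \mathbf{v}.
\]
Since $\mathbf{v}$ lies in the orthogonal complement of the top eigenvector, the Rayleigh quotient bound in terms of the smallest eigenvalue gives $\mathbf{v}^{\top} A\, \mathbf{v} \geq \lambda_n \|\mathbf{v}\|^2$. Substituting the expression for $\|\mathbf{v}\|^2$ computed above and collecting terms yields
\[
2e(S) \;\geq\; d\cdot\tfrac{|S|^2}{n} + \lambda_n\!\left(|S| - \tfrac{|S|^2}{n}\right) \;=\; \tfrac{d - \lambda_n}{n}|S|^2 + \lambda_n |S|,
\]
which is the desired inequality.

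There is no real obstacle here: the only subtle point is ensuring the Rayleigh bound is applied on the correct subspace, which is guaranteed by the $d$-regularity giving us that $\mathbf{1}$ is a top eigenvector of $A$ (so restricting to $\mathbf{1}^{\perp}$ makes $\lambda_n$ the relevant extreme eigenvalue). Everything else is bookkeeping with the orthogonal decomposition.
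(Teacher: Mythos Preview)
Your proof is correct and is essentially the expander-mixing-lemma argument: write $\mathbf{1}_S$ as its projection onto $\mathbf{1}$ plus an orthogonal remainder, then bound the remainder's Rayleigh quotient below by $\lambda_n$. The paper takes a different route, using Haemers-style eigenvalue interlacing: it partitions $V(G)$ into $S$ and $V(G)\setminus S$, forms the $2\times 2$ quotient matrix of average degrees, observes that one eigenvalue is $d$ and computes the other eigenvalue $\mu = \tfrac{2e(S)}{|S|} - \tfrac{|S|}{n-|S|}\bigl(d - \tfrac{2e(S)}{|S|}\bigr)$ from the trace, and then invokes interlacing to get $\mu \ge \lambda_n$, which rearranges to the stated bound. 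Your approach is more elementary and self-contained (no black-box interlacing needed), while the paper's quotient-matrix method generalises more readily to weighted graphs and finer partitions; the paper in fact notes just before the lemma that either method works. One small remark: the Rayleigh bound $\mathbf{v}^\top A\,\mathbf{v} \ge \lambda_n\|\mathbf{v}\|^2$ holds for \emph{all} vectors $\mathbf{v}$, not only those orthogonal to $\mathbf{1}$; the orthogonality is used only to kill the cross terms in the decomposition, so your justification could be tightened slightly there.
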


\begin{proof}
The proof is based on eigenvalue interlacing, where we only need the following result: {\em if $V(G)$ is partitioned as $X_1 \cup X_2$ then we can define a quotient matrix $Q$ indexed by these two parts, where $Q_{ij}$ equals the average degree from $X_i$ to $X_j$. The smallest eigenvalue $\mu$ of $Q$ then satisfies $\lambda_2 \geq \mu \geq \lambda_n$}. Recall that $\lambda_n<0$. We refer the reader to \cite{Haemers} for more details on the technique of interlacing.

	Partition the vertex set $V(G)$ as $S \cup (V(G)\setminus S)$. Then the average degree in $S$, $d(S,S)$, equals $2e(S)/|S|$. By recalling that $G$ is $d$-regular, we obtain that the average degree from $S$ to $V(G)\setminus \{S\}$, is $$d(S,V(G)\setminus\{S\})= d-\frac{2e(S)}{|S|}.$$ Since $$|S|d(S,V(G)\setminus \{S\})=(n-|S|)d(V(G)\setminus \{S\},S),$$ we find that $d(V(G)\setminus \{S\},S)=\frac{|S|}{n-|S|}\left(d-\frac{2e(S)}{|S|}\right)$, and finally that $d(V(G)\setminus \{S\},V(G)\setminus \{S\})=d-\frac{|S|}{n-|S|}\left(d-\frac{2e(S)}{|S|}\right)$.

	This gives us the following quotient matrix with respect to the partition $\{S,V(G)\setminus S\}$:
	\[\begin{pmatrix}
	\frac{2e(S)}{|S|} & d-\frac{2e(S)}{|S|} \\
	\frac{|S|}{n-|S|}\left(d-\frac{2e(S)}{|S|}\right) & d-\frac{|S|}{n-|S|}\left(d-\frac{2e(S)}{|S|}\right)
	\end{pmatrix}.\]
	Since the row sums equal $d$, we see that this matrix has eigenvalue $d$. Its other eigenvalue, say $\mu$, can be computed by observing that the trace of the matrix equals $d+\mu$. We find that 
	\begin{align}\mu=\frac{2e(S)}{|S|}-\frac{|S|}{n-|S|}(d-\frac{2e(s)}{|S|}).\label{mu}\end{align}
	On the other hand, by interlacing we know that
	\[\lambda_2 \geq \mu \geq \lambda_n.\]
	Using the second inequality in combination with the expression for $\mu$ found in \eqref{mu}, we obtain the stated inequality.
\end{proof}

We remark that putting $e(S)=0$ in \Cref{lem:lowerboundedges}, we retrieve the well-known Delsarte-Hoffman bound for the independence number $\alpha(G)$ in a regular graph (see e.g. \cite{Haemers2} for an overview of this bound and its history): 
\begin{align}\label{cor:Hoffman}
	\alpha(G) \leq \frac{-\lambda_n}{d-\lambda_n}n.
\end{align}

\begin{cor}\label{cor:supersaturationgraphs}
	Let $G$ be a $d$-regular graph on $n$ vertices with smallest eigenvalue $\lambda$. For all $\varepsilon > 0$, any set $S \subseteq V(G)$ of size at least $(1+\varepsilon)\frac{-n\lambda}{d-\lambda}$ satisfies
	\[2e(S) \geq \left(\frac{\varepsilon}{1+\varepsilon}\right)\frac{(d-\lambda)}{n}|S|^2.\]
\end{cor}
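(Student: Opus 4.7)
The plan is to apply \Cref{lem:lowerboundedges} directly and then use the assumption on $|S|$ to absorb the linear term $\lambda|S|$ into a fraction of the quadratic term. Write $\lambda = \lambda_n$ for the smallest eigenvalue. From \Cref{lem:lowerboundedges} we have
\[
2e(S) \geq \frac{d-\lambda}{n}|S|^2 + \lambda |S|.
\]
Since $\lambda < 0$, the linear term is negative and we need the quadratic term to dominate.

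The key step is to verify that, under the hypothesis $|S| \geq (1+\varepsilon)\frac{-n\lambda}{d-\lambda}$, the inequality
\[
\frac{d-\lambda}{n}|S|^2 + \lambda|S| \geq \frac{\varepsilon}{1+\varepsilon} \cdot \frac{d-\lambda}{n}|S|^2
\]
holds. After rearranging, this is equivalent to
\[
\frac{1}{1+\varepsilon} \cdot \frac{d-\lambda}{n}|S| \geq -\lambda,
\]
which is just the hypothesis rewritten. Combining the two displayed inequalities yields the claim.

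There is essentially no obstacle here: the only thing to be slightly careful about is signs (remember $\lambda_n < 0$, so $-\lambda > 0$) and dividing by $|S|$, which is positive. In effect, the corollary is a quantitative strengthening of the Delsarte--Hoffman bound \eqref{cor:Hoffman}: that bound says that once $|S|$ exceeds the threshold $\frac{-n\lambda}{d-\lambda}$, the set $S$ must contain at least one edge, while this corollary says that if $|S|$ exceeds this threshold by a factor of $1+\varepsilon$, then the edge count in $S$ is at least a fraction $\frac{\varepsilon}{1+\varepsilon}$ of the generic quadratic lower bound $\frac{d-\lambda}{n}|S|^2$ from \Cref{lem:lowerboundedges}.
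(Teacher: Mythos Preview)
Your proof is correct and follows essentially the same route as the paper: both apply \Cref{lem:lowerboundedges} and then use the hypothesis $|S| \geq (1+\varepsilon)\frac{-n\lambda}{d-\lambda}$ to bound the linear term $\lambda|S|$ by $\frac{1}{1+\varepsilon}\cdot\frac{d-\lambda}{n}|S|^2$. The paper simply factors out $|S|^2$ first before making the same substitution, but the algebra is identical.
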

\begin{proof}
	We can rewrite \Cref{lem:lowerboundedges} with $\lambda_n = \lambda$ as 
	\begin{align*}
	2e(S) &\geq |S|^2\left(\frac{d-\lambda}{n}+\frac{\lambda}{|S|}\right) \\
	&\geq |S|^2\left(\frac{d-\lambda}{n}-\frac{1}{1+\varepsilon}\frac{d-\lambda}{n}\right), 
	\end{align*}
	where we used that $\lambda$ is negative and $\frac{1}{|S|}\leq \frac{1}{1+\varepsilon}\frac{d-\lambda}{-n\lambda}$. The lemma follows.
\end{proof}

We are now ready to state a general theorem on the number of independent sets in a $d$-regular graph. We note that similar results are known in the literature, by Alon-R\"odl via the theory of pseudorandom graphs \cite{alon} or similarly via containers by Alon-Balogh-Morris-Samotij \cite{ABMS}, but none of them seem readily applicable to obtain bounds of the form $\binom{(1+o(1))\alpha(G)}{m}$ for the number of independent sets of size $m$, where $m$ only depends on $q$.

\begin{theorem}\label{cor:boundgraphs}
	Let $G$ be a $d$-regular graph on $n$ vertices and with smallest eigenvalue $\lambda$. For all $0<\varepsilon \leq n/d$, let $f = \left(1+\frac{1}{\varepsilon}\right)\frac{n}{d-\lambda}\ln \left(\left(\frac{1}{1+\varepsilon}\right)\frac{d-\lambda}{-\lambda}\right)$ and $\alpha = \frac{-n\lambda}{d-\lambda}$. The number of independent sets of size $m \geq f$ is at most
	\begin{align}\label{boundgraphs}\binom{n}{f}\binom{(1+\varepsilon)\alpha}{m-f}.\end{align}
\end{theorem}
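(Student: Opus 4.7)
The plan is to recognise Theorem \ref{cor:boundgraphs} as a direct application of the container lemma for graphs (\Cref{thm:containerforgraphs}), where the supersaturation statement in \Cref{cor:supersaturationgraphs} is used to supply condition \eqref{eq:graphcondition2}. The only real content is to pick the parameters $R$ and $\beta$ in the container lemma and to check that the stated value of $f$ is precisely what makes condition \eqref{eq:graphcondition1} tight.

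Concretely, first I would set $R = (1+\varepsilon)\alpha = (1+\varepsilon)\frac{-n\lambda}{d-\lambda}$, so that \Cref{cor:supersaturationgraphs} immediately yields, for every $S \subseteq V(G)$ with $|S| \geq R$, the bound $2e(S) \geq \beta |S|^2$ with the choice
\[
\beta \;=\; \frac{\varepsilon}{1+\varepsilon}\cdot\frac{d-\lambda}{n}.
\]
This is forced: $R$ is the smallest size where supersaturation kicks in, and $\beta$ is the constant it delivers. I would then quickly check that $0 < \beta < 1$: the constraint $\varepsilon \leq n/d$ combined with $-\lambda \leq d$ gives $\beta \leq (d-\lambda)/(n+d) < 1$ (using $d < n$), so the hypotheses of the container lemma are admissible.

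Second, I would verify condition \eqref{eq:graphcondition1}, namely $e^{-\beta f}n \leq R$, or equivalently $\beta f \geq \ln(n/R)$. Computing,
\[
\frac{n}{R} \;=\; \frac{n(d-\lambda)}{(1+\varepsilon)(-n\lambda)} \;=\; \frac{1}{1+\varepsilon}\cdot\frac{d-\lambda}{-\lambda},
\]
which is exactly the argument of the logarithm appearing in the definition of $f$. Plugging in the stated $f = (1+1/\varepsilon)\,\tfrac{n}{d-\lambda}\,\ln\!\bigl(\tfrac{1}{1+\varepsilon}\cdot\tfrac{d-\lambda}{-\lambda}\bigr)$, the factors telescope:
\[
\beta f \;=\; \frac{\varepsilon}{1+\varepsilon}\cdot\frac{d-\lambda}{n}\cdot\frac{1+\varepsilon}{\varepsilon}\cdot\frac{n}{d-\lambda}\cdot\ln\!\left(\frac{n}{R}\right) \;=\; \ln\!\left(\frac{n}{R}\right),
\]
so \eqref{eq:graphcondition1} holds with equality.

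Finally, applying \Cref{thm:containerforgraphs} with these choices of $f$, $R$, $\beta$ yields that the number of independent sets of size $m \geq f$ is at most $\binom{n}{f}\binom{R}{m-f} = \binom{n}{f}\binom{(1+\varepsilon)\alpha}{m-f}$, which is the desired bound. There is no real obstacle here beyond bookkeeping; the entire theorem is essentially the observation that the Delsarte--Hoffman-style supersaturation of \Cref{cor:supersaturationgraphs} can be fed directly into the container lemma, and the value of $f$ given in the statement is exactly the threshold at which the two container-lemma conditions meet.
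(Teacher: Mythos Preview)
Your proposal is correct and follows exactly the same route as the paper: apply \Cref{thm:containerforgraphs} with $R=(1+\varepsilon)\alpha$ and $\beta=\frac{\varepsilon}{1+\varepsilon}\cdot\frac{d-\lambda}{n}$, invoke \Cref{cor:supersaturationgraphs} for condition \eqref{eq:graphcondition2}, and observe that the given $f$ makes condition \eqref{eq:graphcondition1} hold. The only cosmetic differences are that you spell out the computation $\beta f=\ln(n/R)$ explicitly (the paper just says this is ``readily verified'') and your check that $\beta<1$ uses $-\lambda\le d$ rather than the paper's $-\lambda\le n$, but these lead to the same conclusion.
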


\begin{proof}
	We will apply \Cref{thm:containerforgraphs} to $G$. Define
	\[R = (1+\varepsilon)\frac{-n\lambda}{d-\lambda} = (1+\varepsilon)\alpha \hspace{1cm} \beta = \frac{\varepsilon}{1+\varepsilon}\frac{d-\lambda}{n}.\]
	We see that $\beta=\frac{\varepsilon}{1+\varepsilon}\frac{d-\lambda}{n}\leq \frac{\varepsilon}{1+\varepsilon}\frac{d+n}{n}<1$ since $1+\frac{d}{n}<\frac{1+\varepsilon}{\varepsilon}=1+\frac{1}{\varepsilon}$.

	The condition $e^{-\beta f}n \leq R$ is readily verified with these expressions for $R,\beta,f$ and the condition $2e(S) \geq \beta|S|^2$ for all sets $S$ of size at least $R$ holds by \Cref{cor:supersaturationgraphs}.
\end{proof}

Recall that by the Delsarte-Hoffman bound \eqref{cor:Hoffman}, an independent set in a $d$-regular graph $G$ with smallest eigenvalue $\lambda$ has size at most $\frac{-n\lambda}{d-\lambda}$. In graphs arising from finite geometry, this bound is often sharp and the maximal independent sets are of great interest. Examples include ovoids and spreads in polar spaces. \Cref{cor:boundgraphs} can thus be interpreted as saying that almost all sufficiently large independent sets are subsets of maximal independent sets, and moreover that the overall number of maximal independent sets is not too large. \\

We will now apply \Cref{cor:boundgraphs} to some particular families of graphs arising from finite geometries defined over a finite field $\Fq$. Our goal for each of these cases is to upper bound the number of independent sets of size $m$ by $\binom{(1+o(1))\alpha}{m}$ as $q \to \infty$, where $\alpha = -n\lambda / (d-\lambda)$. Of course, $m$ will need to be sufficiently large to obtain such a result, since a greedy approach shows that the number of independent sets of size $m = \frac{n}{2(d+1)}$ is at least
\[\frac{n(n-(d+1))(n-2(d+1))\dots(n-(m-1)(d+1))}{m!} > \left(\frac{n}{2m}\right)^m = (d+1)^m,\]
which can be much larger than $\binom{(1+o(1))\alpha}{m}$. 

The next result shows that in the finite geometries we consider, we can require $m$ to only be slightly larger than $n/d$, being a factor of a power of $\ln n$ away from it. However, we expend no effort in optimising the exponent of this factor.

\begin{cor}\label{cor:estimationgraphs}
	Let ${\cal G}_q$ be a family of $d$-regular graphs on $n$ vertices and with smallest eigenvalue $\lambda$ defined for infinitely many prime powers $q$, where $n,d,\lambda \to \infty$ as $q \to \infty$.
	
	Let $\alpha := \frac{-\lambda n}{d-\lambda}$ as before, then the number of independent sets of size $m \geq 2\frac{n}{d}\ln^{4}n$, is at most $\binom{(1+o(1))\alpha}{m}$ as $q \to \infty$.
\end{cor}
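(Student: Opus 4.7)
The plan is to apply \Cref{cor:boundgraphs} with a slowly vanishing parameter $\varepsilon = \varepsilon(q) \to 0$, and then repackage the resulting bound $\binom{n}{f}\binom{(1+\varepsilon)\alpha}{m-f}$ as a single binomial of the form $\binom{(1+o(1))\alpha}{m}$. I would take $\varepsilon := 1/\ln n$, which automatically satisfies the hypothesis $\varepsilon \leq n/d$ of \Cref{cor:boundgraphs} because $d \leq n$. If $\alpha < m$ there is nothing to prove, so one may assume throughout that $\alpha \geq m \geq 2(n/d)\ln^4 n$.

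First I would estimate $f$. Since $|\lambda| \leq d$, we have $d-\lambda \geq d$ and $(d-\lambda)/(-\lambda) \leq 1+d \leq 2n$, so the logarithm in the definition of $f$ is at most $2\ln n$ for large $n$, giving
\[ f \;\leq\; 2\!\left(1 + \tfrac{1}{\varepsilon}\right)\frac{n}{d}\ln n \;\leq\; \frac{3}{\varepsilon}\cdot\frac{n}{d}\ln n. \]
For $m \geq 2(n/d)\ln^4 n$ and $\varepsilon = 1/\ln n$ this yields $f/m = O(1/\ln^2 n) = o(1)$; in particular $f \leq m$ eventually, so \Cref{cor:boundgraphs} is indeed applicable.

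Next I would rewrite the product of binomials. Using $\binom{n}{f} \leq (en/f)^f$ together with the telescoping identity
\[ \binom{(1+\varepsilon)\alpha}{m-f} \;=\; \binom{(1+\varepsilon)\alpha}{m}\prod_{i=1}^{f}\frac{m-f+i}{(1+\varepsilon)\alpha - m + i}, \]
and the inequality $(1+\varepsilon)\alpha - m \geq \varepsilon\alpha$ (which holds because $m \leq \alpha$), I obtain
\[ \binom{n}{f}\binom{(1+\varepsilon)\alpha}{m-f} \;\leq\; \left(\frac{enm}{f\varepsilon\alpha}\right)^{\!f}\binom{(1+\varepsilon)\alpha}{m}. \]
Note that $\alpha \geq m$ forces $f\varepsilon\alpha \geq 1$, so $\ln(enm/(f\varepsilon\alpha)) = O(\ln n)$.

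Finally, I would absorb the prefactor into a mild dilation $\alpha \mapsto (1+\gamma)\alpha$. A term-by-term comparison yields
\[ \frac{\binom{(1+\gamma)\alpha}{m}}{\binom{(1+\varepsilon)\alpha}{m}} \;=\; \prod_{i=0}^{m-1}\frac{(1+\gamma)\alpha - i}{(1+\varepsilon)\alpha - i} \;\geq\; \exp\!\left(\frac{m(\gamma - \varepsilon)}{4}\right) \]
for $\gamma - \varepsilon \in (0,1]$, so it suffices to take $\gamma := \varepsilon + C f\ln n / m$ for a suitable absolute constant $C$. From the first step, $f\ln n/m = O(1/(\varepsilon\ln^2 n))$; with $\varepsilon = 1/\ln n$ this gives $\gamma - \varepsilon = O(1/\ln n) = o(1)$, so $\gamma = o(1)$ as required. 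The main hurdle is this absorption step: the factor $\binom{n}{f}$ is on its own super-polynomial in $n$, and it must nevertheless be swallowed by a $(1+o(1))$-inflation of $\alpha$. This is exactly what dictates the threshold $m \geq 2(n/d)\ln^4 n$ in the hypothesis; the slack $\ln^4 n$ (as opposed to, say, $\ln n$) is precisely what one needs in order to balance $\varepsilon$ against $f$ and close the asymptotic.
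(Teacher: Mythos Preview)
Your proof is correct and follows the same overall strategy as the paper: apply \Cref{cor:boundgraphs} with $\varepsilon \sim 1/\ln n$, estimate $f$ by the same crude bounds $d-\lambda \geq d$ and $(d-\lambda)/(-\lambda) \leq n$, and then absorb the nuisance factor $\binom{n}{f}$ into a $(1+o(1))$-dilation of $\alpha$. The repackaging is organised differently, however. The paper bounds $\binom{n}{f} \leq n^f$, shows directly that the threshold on $m$ forces $n^f \leq (1+\varepsilon)^{m-f}$, and then uses the inequality $c^{m-f}\binom{a}{m-f} \leq \binom{ca}{m-f}$ to reach $\binom{(1+\varepsilon)^2\alpha}{m-f}$; a short case split (on whether $m$ is above or below $\alpha/2$) then converts $m-f$ to $m$ at the cost of a further $(1+\varepsilon)$. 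Your route instead telescopes $\binom{(1+\varepsilon)\alpha}{m-f}$ against $\binom{(1+\varepsilon)\alpha}{m}$ and absorbs everything in one exponential comparison $\binom{(1+\gamma)\alpha}{m}/\binom{(1+\varepsilon)\alpha}{m} \geq \exp(m(\gamma-\varepsilon)/4)$, which avoids the case analysis at the price of tracking the explicit prefactor $(enm/(f\varepsilon\alpha))^f$.

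One small point: the sentence ``$\alpha \geq m$ forces $f\varepsilon\alpha \geq 1$'' is not a consequence of $\alpha \geq m$ alone. What you actually use is $\varepsilon\alpha = \alpha/\ln n \geq m/\ln n \geq 2(n/d)\ln^3 n \geq 1$, together with $f \geq 1$ (which one may assume, since for $f<1$ the bound $\binom{n}{f}\binom{(1+\varepsilon)\alpha}{m-f}$ is already $\binom{(1+\varepsilon)\alpha}{m}$). With that clarification the estimate $\ln(enm/(f\varepsilon\alpha)) = O(\ln n)$ goes through and the remainder of your argument is clean.
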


\begin{proof}
	The crux of the proof lies in showing that the second factor in \eqref{boundgraphs} dominates the first for suitably large values of $m$. Let $0 < \varepsilon \leq n/d$ and $f$ be as in \Cref{cor:boundgraphs} and suppose $m \geq f\left(\frac{\ln n}{\ln(1+\varepsilon)}+1\right)$. Then by \Cref{cor:boundgraphs}, the number of independent sets of size $m \geq f\left(\frac{\ln n}{\ln(1+\varepsilon)}+1\right)$ is at most $\binom{n}{f}\binom{(1+\varepsilon)\alpha}{m-f}$.
	
	Since $m \geq f\left(\frac{\ln n}{\ln(1+\varepsilon)}+1\right)$, we have that $f\left(\ln n+\ln(1+\varepsilon)\right)\leq m \ln(1+\varepsilon)$, which implies that $f\ln n\leq (m-f)\ln(1+\varepsilon)$ and hence $n^f\leq (1+\varepsilon)^{m-f}$.
	We see that 
	\begin{align*}
	\binom{n}{f}\binom{(1+\varepsilon)\alpha}{m-f} &\leq n^f \cdot\binom{(1+\varepsilon)\alpha}{m-f} \\
	&\leq (1+\varepsilon)^{m-f}\cdot\binom{(1+\varepsilon)\alpha}{m-f} \\
	&\leq \binom{(1+\varepsilon)^2\alpha}{m-f},
	\end{align*}
	where we used $\binom{a}{b} \leq a^b$ and {$c^{m-f}\binom{a}{m-f} \leq \binom{ca}{m-f}$} for $c > 1$. 

	Moreover if $m \leq (1+\varepsilon)^2\alpha/2$, we immediately find $\binom{(1+\varepsilon)^2\alpha}{m-f} \leq \binom{(1+\varepsilon)^2\alpha}{m}$. When $m \geq \alpha/2$, we can see that $m/(m-f) < 1+\varepsilon$ for sufficiently large $q$ since $f = o(\alpha)$ and hence $f = o(m)$. Using $\binom{ca}{cb} > \binom{a}{b}$ for $ c> 1$, it then follows that 
	\[\binom{(1+\varepsilon)^2\alpha}{m-f} < \binom{\frac{m}{m-f}(1+\varepsilon)^2\alpha}{m} < \binom{(1+\varepsilon)^3\alpha}{m}.\]
	
	Now we will estimate the expression $f\cdot\left(\frac{\ln n}{\ln(1+\varepsilon)}+1\right)= \left(1+\frac{1}{\varepsilon}\right)\frac{n}{d-\lambda}\ln \left(\left(\frac{1}{1+\varepsilon}\right)\frac{d-\lambda}{-\lambda}\right) \cdot \left(\frac{\ln n}{\ln(1+\varepsilon)}+1\right)$ and set $\varepsilon = (\ln n-1)^{-1}$. We will use the following inequalities for each of the respective factors: 
	\[1+\frac{1}{\varepsilon} = \ln n, \hspace{.5cm} \frac{n}{d-\lambda}\leq \frac{n}{d}, \hspace{.5cm} \frac{1}{1+\varepsilon}<1, \hspace{.5cm} \frac{d-\lambda}{-\lambda}\leq n, \hspace{.5cm} \frac{\ln n}{\ln(1+\varepsilon)}+1\leq 2 \ln^{2 n},\]
	where we respectively used the expression for $\varepsilon$, $\lambda < 0$, $\varepsilon > 0$, the fact that $1 \leq \alpha(G) \leq \alpha$ and $x/2 \leq \ln(1+x)$ for $0<x<1$. We can combine this to find	
	\begin{align*}\left(1+\frac{1}{\varepsilon}\right)\frac{n}{d-\lambda}\ln \left(\left(\frac{1}{1+\varepsilon}\right)\frac{d-\lambda}{-\lambda}\right) \cdot \left(\frac{\ln n}{\ln(1+\varepsilon)}+1\right)&\leq \ln n\cdot \frac{n}{d}\cdot \ln n\cdot 2\ln^{2} n\\
	&= 2\cdot \frac{n}{d} \cdot\ln^{4} n.\end{align*}
	\end{proof}	

Note that the same argument can be made for $m\geq \frac{n}{d}\ln^2n\omega^2(n)$, where $\omega(n)$ is any function tending to infinity when $n\to \infty$, by setting $\varepsilon = (\omega(n))^{-1}$.

\subsection{Partial ovoids in polar spaces}

We will only consider polar spaces which are embedded in a projective space. A polar space $\mathcal{P}$ can be thought of as a point-line incidence geometry where the {\em rank} of this geometry is the (vector) dimension of the largest subspace totally contained in the point set of $\mathcal{P}$. Subspaces of maximal rank contained in $\mathcal{P}$ are called {\em generators}. Consider a subspace $\pi$ of next-to-maximal rank contained in $\mathcal{P}$, and let $s+1$ be the number of generators through $\pi$. The number $s+1$ is independent of the choice of $\pi$ and $s$ is called the {\em type} of the polar space. If $\mathcal{P}$ is embedded in a projective space of order $q$, the type $s$ of $\mathcal{P}$ is of the form $q^t$ where $t\in \{0,1/2,1,3/2,2\}$.

We let $\mathrm{PS}(r,t,q)$ denote a polar space of rank $r \geq 2$, type $q^t$ and defined over a finite field of order $q$. 
The first graph we will consider is the collinearity graph of $\mathrm{PS}(r,t,q)$. The vertices of this graph are given by the points of the polar space, and two vertices are adjacent if and only if the corresponding points are collinear. The collinearity graphs of polar spaces are well-known to be strongly regular. 

The parameters of the collinearity graph are as follows (see e.g. \cite[Theorem 2.2.12]{Brouwer-VM}).
\[n = \frac{q^r-1}{q-1}\left(q^{r+t-1}+1\right), \hspace{1cm} d = \frac{q^{r}-q}{q-1}\left(q^{r+t-2}+1\right), \hspace{1cm} \lambda = -q^{r+t-2}-1.\]

A {\em partial ovoid} in a polar space $\mathcal{P}$ is a set of points $\mathcal{O}$ such that no generator contains more than one point of $\mathcal{O}$. Since every line contained in $\mathcal{P}$ is contained in (at least) a generator, it follows that no two points of a partial ovoid are collinear (i.e. contained in a line of $\mathcal{P}$). Vice versa, a set $\mathcal{O}$ of mutually non-collinear points will have the property that no generator contains more than one point of $\mathcal{O}$. In other words, partial ovoids are exactly the independent sets in the collinearity graph of the polar space.

A direct application of the Delsarte-Hoffman bound \eqref{cor:Hoffman} gives the following well-known upper bound on the size of a partial ovoids:
\begin{cor} Let $\mathcal{O}$ be a partial ovoid of $\mathrm{PS}(r,t,q)$, then 
	\[ |\mathcal{O}|\leq q^{r+t-1}+1.\]
\end{cor}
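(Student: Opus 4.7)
The plan is to invoke the Delsarte–Hoffman bound \eqref{cor:Hoffman} directly on the collinearity graph of $\mathrm{PS}(r,t,q)$ and verify that the resulting expression simplifies to $q^{r+t-1}+1$. The conceptual input has already been supplied in the preceding paragraphs: partial ovoids are precisely the independent sets of the collinearity graph, and the collinearity graph is strongly regular with the parameters $n,d,\lambda$ recorded just above the statement. So the proof reduces to a routine algebraic manipulation.

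Concretely, I would first recall (or cite \cite[Theorem 2.2.12]{Brouwer-VM}) that the collinearity graph is $d$-regular with smallest eigenvalue $\lambda = -q^{r+t-2}-1$, and that $\mathcal{O}$ being a partial ovoid is equivalent to being an independent set in this graph. Then by \eqref{cor:Hoffman},
\[
|\mathcal{O}| \;\leq\; \frac{-\lambda \cdot n}{d-\lambda}.
\]
The main (and only) computation is to simplify $d-\lambda$. Factoring out $(q^{r+t-2}+1)$ gives
\[
d-\lambda \;=\; (q^{r+t-2}+1)\left(\frac{q^r-q}{q-1}+1\right) \;=\; (q^{r+t-2}+1)\cdot\frac{q^r-1}{q-1}.
\]
Plugging this in together with $-\lambda = q^{r+t-2}+1$ and $n = \tfrac{q^r-1}{q-1}(q^{r+t-1}+1)$ causes the factors $(q^{r+t-2}+1)$ and $\tfrac{q^r-1}{q-1}$ to cancel, leaving exactly $q^{r+t-1}+1$.

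There is no genuine obstacle here: the nontrivial content (the Hoffman bound itself and its application to a regular graph) is already encapsulated in \eqref{cor:Hoffman}, which in turn was derived from \Cref{lem:lowerboundedges} by setting $e(S)=0$. The only thing to watch is the bookkeeping of the three parameters $n,d,\lambda$; one might find it cleanest to first record $-\lambda$ and $d-\lambda$ as standalone identities before substituting into \eqref{cor:Hoffman}, so that the cancellation is transparent.
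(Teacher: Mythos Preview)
Your proposal is correct and takes exactly the same approach as the paper: the paper simply states that this corollary is ``a direct application of the Delsarte--Hoffman bound \eqref{cor:Hoffman}'' and gives no further detail, while you have supplied the (correct) algebraic verification that $\dfrac{-\lambda n}{d-\lambda}$ simplifies to $q^{r+t-1}+1$.
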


Moreover, partial ovoids attaining this upper bound (which are also called {\em ovoids}) are only known to exist for small $r$ (see e.g. \cite[Section 3.1]{JanChapter}). If for a polar space $\mathcal{P}$, ovoids do exist, then the number of partial ovoids in $\mathcal{P}$ of size $m$ is clearly at least $\binom{q^{r+t-1}+1}{m}$. We will now prove an asymptotically matching upper bound using \Cref{cor:boundgraphs} for sufficiently large $m$. 


\begin{prop}\label{prop:partialovoids}
	Let $\mathrm{PS}(r,t,q)$ be a polar space, then the number of partial ovoids of size $m \geq c_{r,t}q\ln^{4} q$, is at most 
	\[\binom{(1+o(1))(q^{r+t-1}+1)}{m},\]
	where $c_{r,t}$ is a constant only depending on $r$ and $t$ and can be taken to be $4(2r+t-1)^{4}$.
\end{prop}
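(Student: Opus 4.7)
The plan is to apply \Cref{cor:estimationgraphs} directly to the collinearity graph $G$ of $\mathrm{PS}(r,t,q)$, since, as noted just before the proposition, partial ovoids are precisely the independent sets of $G$. The parameters $n,d,\lambda$ of this strongly regular graph are given explicitly above the proposition, and since $r\geq 2$ each of $n,d,-\lambda$ tends to infinity as $q\to\infty$, so the hypothesis of \Cref{cor:estimationgraphs} is satisfied. It then remains to identify the quantity $\alpha=-n\lambda/(d-\lambda)$ and to translate the threshold $m\geq 2(n/d)\ln^{4}n$ into the cleaner expression $m\geq 4(2r+t-1)^{4}q\ln^{4}q$.

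For the identification of $\alpha$, I would just factor $d-\lambda$:
\[ d-\lambda = (q^{r+t-2}+1)\!\left(\tfrac{q^r-q}{q-1}+1\right) = (q^{r+t-2}+1)\cdot\tfrac{q^r-1}{q-1}, \]
so the factors $(q^{r+t-2}+1)$ and $\frac{q^r-1}{q-1}$ cancel against those in $n\cdot(-\lambda)$, leaving $\alpha=q^{r+t-1}+1$. This coincides with the Delsarte--Hoffman bound on the size of a partial ovoid, which is the answer we expect.

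For the threshold, I would bound the two factors in
\[ \frac{n}{d} \;=\; \frac{q^{r}-1}{q^{r}-q}\cdot\frac{q^{r+t-1}+1}{q^{r+t-2}+1} \]
separately; the first tends to $1$ and the second to $q$, and one checks that for $q$ large enough the product is at most $2q$. Crudely, $n\leq 4q^{2r+t-2}$, so $\ln n\leq (2r+t-1)\ln q$ for $q$ large. Combining,
\[ 2\cdot\frac{n}{d}\cdot\ln^{4}n \;\leq\; 2\cdot 2q\cdot(2r+t-1)^{4}\ln^{4}q \;=\; 4(2r+t-1)^{4}\,q\ln^{4}q \;=\; c_{r,t}\,q\ln^{4}q, \]
so for $m\geq c_{r,t}q\ln^{4}q$ the hypothesis of \Cref{cor:estimationgraphs} is met, and its conclusion gives exactly the bound $\binom{(1+o(1))(q^{r+t-1}+1)}{m}$ claimed by the proposition.

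There is no substantive difficulty here: the entire proof is a direct appeal to \Cref{cor:estimationgraphs}, and the only work is the bookkeeping to verify that $\alpha$ simplifies to $q^{r+t-1}+1$ and that the threshold $2(n/d)\ln^{4}n$ is absorbed by $c_{r,t}q\ln^{4}q$ with the stated explicit constant.
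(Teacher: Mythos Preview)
Your proposal is correct and follows essentially the same approach as the paper: a direct application of \Cref{cor:estimationgraphs} together with the elementary estimates $n/d\leq 2q$ and $\ln n\leq (2r+t-1)\ln q$ for large $q$. The only difference is cosmetic---you spell out the cancellation giving $\alpha=q^{r+t-1}+1$, while the paper takes this as already established from the Delsarte--Hoffman bound stated just before the proposition.
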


\begin{proof} We have that 
\begin{align*}
2\cdot\frac{n}{d}\cdot\ln^{4}n &=2\cdot \frac{\frac{q^r-1}{q-1}(q^{r+t-1}+1)}{\frac{q^r-q}{q-1}(q^{r+t-2}+1)}\cdot\ln^{4}\left(\frac{q^r-1}{q-1}(q^{r+t-1}+1) \right)\\
&\leq 4q\ln^{4}(q^{2r+t-1})\\
&=4(2r+t-1)^{4}q\ln^{4}q,
\end{align*}
where we have used that $\frac{\frac{q^r-1}{q-1}(q^{r+t-1}+1)}{\frac{q^r-q}{q-1}(q^{r+t-2}+1)}<2q$ for sufficiently large $q$.
The proposition now follows from \Cref{cor:estimationgraphs}.

	
%
\end{proof}

\subsection{EKR-sets of flags in polar spaces}

Our next example will come from independent sets in oppositeness graphs in polar spaces. These have been studied in \cite{DBMM} in the context of Erd\H{o}s-Ko-Rado results in polar spaces. Their definition is as follows.
A {\em flag} $\cal F$ in a polar space $\cP$ is a set of subspaces ${\cal F} = \{F_1,\dots,F_k\}$ such that $F_i \lneq F_{i+1}$ for $i = 1,\dots,k-1$. The {\em type} of a subspace of $\cP$ is its vector dimension, and the {\em type} of a flag is the set of types of the subspaces it contains. Two subspaces in $\cP$ are {\em opposite} if no point in the first is collinear to all points of the latter and vice versa. Two flags of the same type in $\cP$ are opposite if all pairs of subspaces of the same type are opposite.

\begin{defin}
	Consider $\cP = \mathrm{PS}(r,t,q)$ and let $J \subseteq [r]$, then the \textbf{oppositeness graph} $G_J$ has all flags in $\cP$ of type $J$ as vertices and two vertices are adjacent if the corresponding flags are opposite.
\end{defin}

For simplicity we will only consider maximal flags, i.e.\ flags of type $[r]$ in a polar space $\mathrm{PS}(r,t,q)$, but analogous results can be deduced for any possible type. The parameters of the oppositeness graph on maximal flags have been computed \cite{DBMM} and are

\[n = \prod_{i=1}^{r}(q^{r+t-i}+1)\qbinom{i}{1}, \hspace{1cm} d = q^{r(r+t-1)} \hspace{1cm} \lambda = \min\{-q^{(r-1)(r+t-1)},(-1)^nq^{r(r-1)}\}.\] 

Independent sets in this graph are sets of pairwise non-opposite flags. They are called {\em Erd\H{o}s-Ko-Rado sets of flags} inspired by the famous result due to Erd\H{o}s, Ko and Rado \cite{EKR} on intersecting families of subsets. If $t \geq 1$ or $n$ is even, we find by the Delsarte-Hoffman bound \eqref{cor:Hoffman} that for an EKR-set of flags $C$ we have 
\[|C| \leq \qbinom{r}{1}\prod_{i=1}^{r-1}(q^{r+t-i}+1)\qbinom{i}{1} =: \alpha_{\rm{EKR}},\]
and this is sharp as shown by the set of maximal flags whose generators all contain a fixed point. 

When $t = 1/2$ and $r$ is odd, it is not known whether the bound, using $\lambda = -q^{r(r-1)}$, is sharp. For $t = 0$ and $r$ odd, one can compute the bound and see that it is also sharp, we refer the interested reader to \cite{DBMM} for more information. We conclude that in the majority of the cases, we can find a lower bound $\binom{\alpha_{\rm EKR}}{m}$ for the number of independent sets of size $m$ in the oppositeness graph. We will now show that this bound is asymptotically sharp, whenever $t \geq 1$ or $r$ is even.

\begin{prop}\label{prop:countEKRsets}
	Let $\mathrm{PS}(r,t,q)$ be a polar space, where $t \geq 1$ or $n$ even, then the number of EKR-sets of maximal flags of size $m \geq c_{r,t}\ln^{4}q$, is at most 
	\[\binom{(1+o(1))\alpha_{\rm{EKR}}}{m},\]
	where $c_{r,t}$ is a constant depending only on $r$ and $t$ and can be taken to be $64r^{4}(r+t-1)^{4}$.
\end{prop}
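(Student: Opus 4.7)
The plan is to apply \Cref{cor:estimationgraphs} directly to the oppositeness graph $G_{[r]}$ on maximal flags in $\mathrm{PS}(r,t,q)$, in complete analogy with the proof of \Cref{prop:partialovoids}. By definition, an EKR-set of maximal flags is precisely an independent set in $G_{[r]}$, so the corollary yields an upper bound of $\binom{(1+o(1))\alpha}{m}$ on their number, where $\alpha = -n\lambda/(d-\lambda)$ for the values of $n,d,\lambda$ recorded just before the statement. First I would note that under either hypothesis ($t \geq 1$ or $n$ even), one checks that $\min\{-q^{(r-1)(r+t-1)},(-1)^n q^{r(r-1)}\} = -q^{(r-1)(r+t-1)}$, so $\lambda = -q^{(r-1)(r+t-1)}$ and the Delsarte-Hoffman computation collapses to $\alpha = n/(q^{r+t-1}+1) = \alpha_{\rm{EKR}}$ as stated. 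Since $r \geq 2$, the quantities $n,d,|\lambda|$ all tend to infinity with $q$, so the hypotheses of \Cref{cor:estimationgraphs} are satisfied.

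Next, I would verify that the threshold $2(n/d)\ln^{4} n$ appearing in \Cref{cor:estimationgraphs} is dominated by $c_{r,t}\ln^{4}q$ with $c_{r,t} = 64 r^{4}(r+t-1)^{4}$. For the first factor, each term of
\[\frac{n}{d} = \frac{1}{q^{r(r+t-1)}}\prod_{i=1}^{r}(q^{r+t-i}+1)\qbinom{i}{1}\]
can be written as $q^{r+t-1}(1+O(q^{-1}))$, since $(q^{r+t-i}+1) = q^{r+t-i}(1+q^{-(r+t-i)})$ and $\qbinom{i}{1} = q^{i-1}(1+O(q^{-1}))$. The powers of $q$ cancel against the denominator, leaving a product of $r$ factors each of which is $1+O(q^{-1})$; so for $q$ sufficiently large (depending on $r,t$) we have $n/d \leq 2$.

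For the logarithmic factor, the crude bound $n \leq q^{r(r+t-1)}\cdot 2^{r}\cdot 2^{r}$ (one factor of $2$ per factor in each product) gives $\ln n \leq r(r+t-1)\ln q + 2r\ln 2 \leq 2r(r+t-1)\ln q$ for $q$ large enough. Combining the two estimates,
\[2\cdot\frac{n}{d}\cdot\ln^{4}n \;\leq\; 4\cdot\bigl(2r(r+t-1)\bigr)^{4}\ln^{4}q \;=\; 64\,r^{4}(r+t-1)^{4}\ln^{4}q,\]
so every $m \geq c_{r,t}\ln^{4}q$ satisfies $m \geq 2(n/d)\ln^{4}n$ and \Cref{cor:estimationgraphs} delivers the desired bound $\binom{(1+o(1))\alpha_{\rm{EKR}}}{m}$.

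The one slightly delicate point is the case analysis for $\lambda$: in the excluded regime ($t \in \{0,1/2\}$ with $n$ odd) the second argument of the minimum takes over and the Delsarte-Hoffman bound no longer matches $\alpha_{\rm{EKR}}$, which is why the hypothesis is imposed. All remaining steps are routine asymptotic manipulations of the explicit formulas for $n,d,\lambda$.
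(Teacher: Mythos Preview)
Your proof is correct and follows essentially the same route as the paper: apply \Cref{cor:estimationgraphs} to the oppositeness graph, then bound $2(n/d)\ln^{4}n$ by showing $n/d \leq 2$ and $\ln n \leq 2r(r+t-1)\ln q$ for large $q$, yielding the constant $64r^{4}(r+t-1)^{4}$. The extra verification you supply---that the hypothesis $t\geq 1$ or $n$ even forces $\lambda=-q^{(r-1)(r+t-1)}$ and hence $\alpha=\alpha_{\rm EKR}$---is a detail the paper leaves implicit in the discussion preceding the proposition, but your argument for it is sound.
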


\begin{proof}
	As before, we have that for sufficiently large $q$
	\begin{align*}
	2\cdot\frac{n}{d}\cdot\ln^{4} n &\leq  2\cdot \frac{2q^{r(r+t-1)}}{q^{r(r+t-1)}}\cdot\ln^{4}\left(2q^{r(r+t-1)} \right)\\
	&\leq 4\cdot 2^4r^{4}(r+t-1)^{4}\ln^{4} q,
	\end{align*}
	using $\ln(2q^{r(r+t-1)}) \leq 2\ln(q^{r(r+t-1)})$.
	The proposition now follows again from \Cref{cor:estimationgraphs}.
	
%
%
%
\end{proof}

\subsection{Partial spreads in projective spaces}

A \textit{partial spread} in a projective space $\PG(r-1,q)$ is a set of pairwise disjoint $k-1$-dimensional spaces. A \textit{spread} is a partial spread that moreover partitions the point set of $\PG(r-1,q)$. It is known that spreads exist if and only if $k$ divides $r$ and clearly they have size $({q^r-1})/({q^k-1})$. When $k = 2$, we can deduce the number of partial line spreads from our previous results. For in this case, the graph whose vertices are the lines of $\PG(r-1,q)$, two lines adjacent whenever they intersect, is a strongly regular graph whose eigenvalues are well-understood \cite[Section 1.2.4]{Brouwer-VM}. For this graph we have 
\[n = \qbinom{r}{2}, \hspace{1cm} d = (q+1)\left(\qbinom{r-1}{1}-1\right), \hspace{1cm} \lambda = -q-1.\]

The Delsarte-Hoffman bound thus gives $\alpha = (q^{r}-1)/(q^2-1)$, which is indeed attained by line spreads whenever $r$ is even. For odd $r$, the results of \cite{NastaseSissokho} show the existence of partial spreads of size $(1+o(1))q^{r-2}$. The upper bound in the following proposition is hence asymptotically sharp for both even and odd $r$.

\begin{prop}\label{prop:countlinespreads}
	For all $r \geq 4$, the number of partial line spreads in $\PG(r-1,q)$ of size $m \geq c_rq^{r-3}\ln^4 q$ is at most
	\[\binom{(1+o(1))\frac{q^{r}-1}{q^2-1}}{m},\]
	where $c_r$ can be taken to be $2^{10}(r-2)^4$.
\end{prop}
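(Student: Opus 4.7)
The proof is a routine application of \Cref{cor:estimationgraphs} to the intersection graph $G$ of lines in $\PG(r-1,q)$, so the plan boils down to verifying the hypotheses and chasing the constants. The first step is to recall that two lines of $\PG(r-1,q)$ are skew if and only if they are non-adjacent in $G$, so that a partial line spread is precisely an independent set in $G$. The parameters $n=\qbinom{r}{2}$, $d=(q+1)(\qbinom{r-1}{1}-1)$, and $\lambda=-q-1$ are given above, and all three grow with $q$ (for fixed $r\geq 4$), so $G$ sits in a family to which \Cref{cor:estimationgraphs} applies.

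Next, I would compute the Delsarte--Hoffman bound explicitly to see that the target $\alpha$ agrees with the claim. Using $d-\lambda=(q+1)\qbinom{r-1}{1}$, a brief cancellation gives
\[
\alpha \;=\; \frac{-\lambda n}{d-\lambda} \;=\; \frac{(q+1)\qbinom{r}{2}}{(q+1)\qbinom{r-1}{1}} \;=\; \frac{q^{r}-1}{q^{2}-1},
\]
which is exactly the quantity appearing inside the binomial coefficient in the proposition.

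The main (and only) remaining task is to check that the lower bound $m\geq 2(n/d)\ln^{4} n$ required by \Cref{cor:estimationgraphs} is implied by the stated hypothesis $m\geq 2^{10}(r-2)^{4}q^{r-3}\ln^{4} q$. I would use the crude estimates
\[
\frac{n}{d} \;\leq\; 2\,q^{r-3}, \qquad n \;\leq\; q^{2r-3}, \qquad \ln n \;\leq\; (2r-3)\ln q \;\leq\; 4(r-2)\ln q,
\]
valid for all sufficiently large $q$, so that
\[
2\cdot\frac{n}{d}\cdot \ln^{4} n \;\leq\; 2\cdot 2q^{r-3}\cdot 256(r-2)^{4}\ln^{4} q \;=\; 2^{10}(r-2)^{4}q^{r-3}\ln^{4} q.
\]
Once this is in place, \Cref{cor:estimationgraphs} immediately yields the bound $\binom{(1+o(1))\alpha}{m}$.

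There is no real obstacle; the entire argument is a bookkeeping exercise after \Cref{cor:estimationgraphs} is established. The only place where one must be even mildly careful is in choosing the constants when absorbing the lower-order terms of $\qbinom{r}{2}$ and $\qbinom{r-1}{1}$ into clean powers of $q$, but any sufficiently generous crude bounds (such as the ones above) suffice.
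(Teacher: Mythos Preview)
Your proof is correct and follows essentially the same approach as the paper: both apply \Cref{cor:estimationgraphs} directly to the line-intersection graph and verify the threshold $2(n/d)\ln^4 n \leq 2^{10}(r-2)^4 q^{r-3}\ln^4 q$ via crude polynomial bounds on $n$ and $d$. The only cosmetic difference is that the paper bounds $n\leq 2q^{2r-4}$ rather than $n\leq q^{2r-3}$, but both routes yield $\ln n \leq 4(r-2)\ln q$ and hence the identical constant.
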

\begin{proof}
	As before, we have that for sufficiently large $q$
	\begin{align*}
		2\cdot\frac{n}{d}\cdot\ln^{4} n &\leq  2\cdot \frac{2q^{2r-4}}{q^{r-1}}\cdot\ln^{4}\left(2q^{2r-4} \right)\\
		&\leq 4\cdot q^{r-3}\cdot 4^4(r-2)^4\ln^{4} q,
	\end{align*}
	using $\ln(2q^{2r-4}) \leq 2\ln(q^{2r-4})$.
	The proposition now follows again from \Cref{cor:estimationgraphs}.
\end{proof}

When $k > 2$, the same estimate does not follow as easily: the graph $G$ with vertices $(k-1)$-spaces where $(k-1)$-spaces are adjacent when they have a non-empty intersection is no longer strongly regular and the eigenvalues do not provide the answer. For example when $r = 6$ and $k = 3$, partial plane spreads in $\PG(5,q)$ have size at most $q^3+1$. However, when we look at the eigenvalues, the results are far worse. The eigenvalues of the complement graph have been studied \cite{Eisfeld} and are 
\[\mu_1 = q^9, \hspace{1cm} \mu_2 = q^4, \hspace{1cm} \mu_3 = -q^3, \hspace{1cm} \mu_4 = -q^6.\]
This means that for the graph $G$ we have 
\[n = \qbinom{6}{3} \sim q^9, \hspace{1cm} d = \qbinom{6}{3}-q^9-1 \sim q^8, \hspace{1cm} \lambda = -q^4-1,\]
so that the Delsarte-Hoffman bound gives $\alpha(G) \lesssim q^5$, which is far worse than the upper bound $q^3+1$ . We can therefore also not deduce any reasonable supersaturation from the eigenvalues.

Nevertheless, we can still prove supersaturation combinatorially, but the results we obtain are not optimal.

\begin{lemma}\label{lem:planes}
	For all $\varepsilon > 0$, let $S$ be a set of planes of size $(1+\varepsilon)(q^3+1)$. Then there are at least $\varepsilon(q^4+q^2+1)$ pairs of intersecting planes.
\end{lemma}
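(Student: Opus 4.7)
The plan is to double count point-plane incidences between $S$ and the points of $\PG(5,q)$. Setting $d_p$ to be the number of planes of $S$ through a point $p$, the obvious identity $\sum_p d_p = |S|(q^2+q+1) = (1+\varepsilon)N$, where $N = (q^3+1)(q^2+q+1)$ is the total number of points, tells us the average value of $d_p$ is $1+\varepsilon$. This suggests a supersaturation bound coming from some convexity-type argument applied to $\sum_p \binom{d_p}{2}$.

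The second ingredient is to relate this sum to the number of intersecting pairs. A pair of distinct planes in $\PG(5,q)$ meets in at most a line, so splitting the number of intersecting pairs as $P = P_1 + P_2$ according to whether the intersection is a point or a line, one obtains
$$\sum_p \binom{d_p}{2} = P_1 + (q+1)P_2,$$
simply by counting common points. Since $P_1 + (q+1)P_2 \le (q+1)P$, a lower bound on the left translates into a lower bound on $P$ after dividing by $q+1$; the factorisation $N = (q+1)(q^4+q^2+1)$ then shows that a lower bound of $\varepsilon N$ on $\sum_p \binom{d_p}{2}$ is exactly what is required.

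The main obstacle is therefore to establish $\sum_p \binom{d_p}{2} \ge \varepsilon N$ without losing constants. The standard Cauchy--Schwarz estimate using the average $\bar d = 1+\varepsilon$ only yields $\sum_p \binom{d_p}{2} \ge \tfrac{1}{2}(1+\varepsilon)\varepsilon N$, which is off by roughly a factor of $2$ for small $\varepsilon$. To circumvent this, I would use the sharper pointwise inequality $\binom{d_p}{2} \ge d_p - 1$ valid for all integers $d_p \ge 1$ (and trivially for $d_p = 0$). Summing gives
$$\sum_p \binom{d_p}{2} \ge \sum_p d_p - |\{p : d_p \ge 1\}| \ge (1+\varepsilon)N - N = \varepsilon N,$$
using only the tautological bound $|\{p : d_p \ge 1\}| \le N$.

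Combining the two steps yields $(q+1)P \ge \varepsilon N$, i.e.\ $P \ge \varepsilon(q^4+q^2+1)$, as desired. Intuitively, the linear bound $\binom{d_p}{2} \ge d_p - 1$ is sharp precisely when the $d_p$ are concentrated in $\{0,1,2\}$, which is the regime forced by the hypothesis $|S| = (1+\varepsilon)(q^3+1)$ being just slightly above the spread threshold; this is why it produces the correct constant while the Cauchy--Schwarz bound, which is tuned for large degrees, does not.
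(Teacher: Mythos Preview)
Your proof is correct and follows essentially the same approach as the paper's: both count point--plane incidences to obtain an excess of $\varepsilon N$ over the $N=\frac{q^6-1}{q-1}$ points of $\PG(5,q)$, and then divide by $q+1$ using that two planes meet in at most a line. Your write-up is in fact more explicit than the paper's, which compresses the step $\sum_p\binom{d_p}{2}\ge \sum_p d_p - |\{p:d_p\ge 1\}|\ge \varepsilon N$ into a single sentence; your use of the pointwise inequality $\binom{d_p}{2}\ge d_p-1$ makes that passage rigorous.
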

\begin{proof}
	Counting with multiplicity, the set $S$ covers $(1+\varepsilon)\frac{q^6-1}{q-1}$ points so that there are $\varepsilon \frac{q^6-1}{q-1}$ points (again counted with multiplicity) where two distinct planes meet. Since two planes meet in at most a line, i.e.\ $q+1$ points, this implies that there must be at least $\varepsilon\frac{q^6-1}{q^2-1} = \varepsilon (q^4+q^2+1)$ pairs of intersecting planes. 
\end{proof}

In other words, if $G$ denotes the graph on the planes in $\PG(5,q)$, two planes adjacent whenever they intersect non-trivially, we see that for all $S \subset V(G)$ such that $|S| = (1+\varepsilon)(q^3+1)$ we have
\[2e(S) \geq \varepsilon(q^4+q^2+1) \geq \frac{\varepsilon}{(1+\varepsilon)^2}\frac{1}{q^2}|S|^2.\]

\begin{cor}\label{cor:countplanespreads}
	The number of partial plane spreads in $\PG(5,q)$ of size $m \geq 28q^2\ln^4 q$ is at most
	\[\binom{(1+o(1))(q^3+1)}{m}.\]
\end{cor}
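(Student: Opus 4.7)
The plan is to apply the graph container lemma (\Cref{thm:containerforgraphs}) directly to the graph $G$ whose vertices are the planes of $\PG(5,q)$ and whose edges record non-trivial intersection, using the combinatorial supersaturation of \Cref{lem:planes} in place of the eigenvalue-based supersaturation of \Cref{cor:supersaturationgraphs}. Going through \Cref{cor:estimationgraphs} as a black box is not available here, because the spectral input would only force $\alpha(G) \lesssim q^5$ and a wildly suboptimal $\beta$. Instead, the correct $\beta$ has to come directly from the paragraph just above the statement.

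First I would set $R = (1+\varepsilon)(q^3+1)$ and $\beta = \varepsilon/((1+\varepsilon)^2 q^2)$. The inequality displayed just before \Cref{cor:countplanespreads} already gives $2e(S) \geq \beta|S|^2$ when $|S| = R$; a short monotonicity argument extends this to every $S$ with $|S| \geq R$ (if $|S| = (1+\varepsilon')(q^3+1)$ with $\varepsilon' \geq \varepsilon$, then \Cref{lem:planes} provides $\varepsilon'(q^4+q^2+1)$ intersecting pairs, and one checks $\varepsilon'(1+\varepsilon)^2 \geq \varepsilon(1+\varepsilon')^2$). This verifies \eqref{eq:graphcondition2}. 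I would then pick $f$ as the smallest integer with $e^{-\beta f}n \leq R$; since $n = \qbinom{6}{3} \sim q^9$ and $R \sim q^3$, this produces $f = O(\ln(q^6)/\beta) = O(q^2\ln q/\varepsilon)$, and \Cref{thm:containerforgraphs} bounds the number of independent sets of size $m \geq f$ by $\binom{n}{f}\binom{R}{m-f}$.

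The second half of the argument imitates \Cref{cor:estimationgraphs}. I would impose $m \geq f(\ln n / \ln(1+\varepsilon) + 1)$, which forces $n^f \leq (1+\varepsilon)^{m-f}$, and then apply the three inequalities $\binom{a}{b}\leq a^b$, $c^{m-f}\binom{a}{m-f}\leq \binom{ca}{m-f}$, and (valid because $f = o(m)$) $\binom{a}{m-f}\leq \binom{(1+o(1))a}{m}$ to fold the $\binom{n}{f}$ factor into the binomial, yielding a bound of the shape $\binom{(1+O(\varepsilon))(q^3+1)}{m}$. Taking $\varepsilon = (\ln q)^{-1}$, so that $1/\varepsilon+1 \sim \ln q$ and $\ln n/\ln(1+\varepsilon) \sim 9\ln^2 q$, gives $f \sim q^2\ln^2 q$ and threshold $m \gtrsim q^2\ln^4 q$, with the $(1+O(\varepsilon))$ prefactor becoming the $(1+o(1))$ in the statement.

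The only genuinely new input compared to \Cref{prop:partialovoids,prop:countEKRsets,prop:countlinespreads} is the combinatorial supersaturation, already provided by \Cref{lem:planes}, so the step I expect to be most delicate is the explicit constant: one has to check that the double-logarithmic losses in $\varepsilon$ and $\ln(1+\varepsilon)$, multiplied by the effective ratio $q^2$ extracted from \Cref{lem:planes} (playing the role that $n/d$ plays in \Cref{cor:estimationgraphs}), combine to give the advertised $28$ rather than a larger constant. This is routine bookkeeping, but it is the only point where an error would change the statement.
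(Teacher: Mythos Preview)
Your approach matches the paper's: apply \Cref{thm:containerforgraphs} with $R=(1+\varepsilon)(q^3+1)$ and $\beta=\varepsilon/((1+\varepsilon)^2q^2)$, take $\varepsilon=(\ln q)^{-1}$, and then absorb the $\binom{n}{f}$ factor into the main binomial. Two points deserve attention, however.

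First, your monotonicity claim is wrong as written. The inequality $\varepsilon'(1+\varepsilon)^2 \geq \varepsilon(1+\varepsilon')^2$ is equivalent to $\varepsilon'/(1+\varepsilon')^2 \geq \varepsilon/(1+\varepsilon)^2$, and the function $x\mapsto x/(1+x)^2$ peaks at $x=1$ and then decreases to $0$; so the inequality fails once $\varepsilon' > 1/\varepsilon$. With $\varepsilon=(\ln q)^{-1}$ the sets $S$ in \eqref{eq:graphcondition2} range up to $|S|=n\sim q^9$, i.e.\ $\varepsilon'\sim q^6$, so the linear-in-$\varepsilon'$ bound of \Cref{lem:planes} alone does \emph{not} verify \eqref{eq:graphcondition2} for all $|S|\geq R$. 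The fix is easy: a convexity argument on the point-multiplicities upgrades \Cref{lem:planes} to $e(S)\geq \tfrac{1}{2}(1+\varepsilon')\varepsilon'(q^4+q^2+1)$, after which the required comparison becomes $\varepsilon'/(1+\varepsilon')\geq \varepsilon/(1+\varepsilon)^2$, and this \emph{is} monotone in $\varepsilon'$. (The paper does not spell this step out either, so you are not worse off than the original; but since you attempted a justification, it is worth getting right.)

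Second, the absorption you borrow from the proof of \Cref{cor:estimationgraphs}, namely imposing $m\geq f\bigl(\ln n/\ln(1+\varepsilon)+1\bigr)$ so that $n^f\leq(1+\varepsilon)^{m-f}$, does not recover the constant $28$: with $\varepsilon=(\ln q)^{-1}$ this asks for $m$ to exceed $f$ by a factor of order $\ln n\cdot\ln q\sim 9\ln^2 q$ (or $2\ln^2 n$ as in \Cref{cor:estimationgraphs}), and combined with $f$ of order $q^2\ln^2 q$ you land at a noticeably larger multiple of $q^2\ln^4 q$. The paper instead only requires $m\geq f\ln^2 q$ and handles the prefactor via
\[
\binom{n}{f}\leq n^f\leq n^{m/\ln^2 q}=\bigl(n^{1/\ln^2 q}\bigr)^m\leq (1+20/\ln q)^m,
\]
using $n^{1/\ln^2 q}=e^{\ln n/\ln^2 q}\leq e^{10/\ln q}$. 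This direct estimate, together with the crude bound $f\leq 4q^2\ln q\cdot\ln(2q^6)\leq 28q^2\ln^2 q$, is exactly what produces the threshold $28q^2\ln^4 q$. So this step is not just bookkeeping: to hit the stated constant you need the paper's sharper absorption rather than the \Cref{cor:estimationgraphs} template.
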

\begin{proof}
	For any $\varepsilon > 0$, we can use \Cref{thm:containerforgraphs} directly with $R = (1+\varepsilon)(q^3+1)$, $\beta = \frac{\varepsilon}{(1+\varepsilon)^2q^2}$ and hence
	\[f = \frac{1}{\beta}\ln\left(\frac{n}{R}\right) = \frac{(1+\varepsilon)^2q^2}{\varepsilon}\ln\left(\frac{\qbinom{6}{3}}{(1+\varepsilon)(q^3+1)}\right)\]
	suffices. We conclude that the number of independent sets of size $m \geq f$ is at most 
	\[\binom{\qbinom{6}{3}}{f}\binom{(1+\varepsilon)(q^3+1)}{m-f}.\]

	Now set $\varepsilon = (\ln q)^{-1}$ so that 
	\begin{align*}
		f &\leq 4q^2\ln q \cdot \ln\left(2q^6\right) \\
		&\leq 28q^2\ln^2 q,
	\end{align*}
	where we used that $1<1+\varepsilon \leq 2$ and $\ln(2q^6) \leq 7\ln q$. When $m \geq 28q^2\ln^4 q$, we have $m \geq f\ln^2 q$ and so
	\begin{align*}
		\binom{(1+\varepsilon)R}{m-f} \leq \binom{\frac{m}{m-f}(1+\varepsilon)R}{m} \leq \binom{(1+\varepsilon)^2R}{m},
	\end{align*}
	using again $\binom{a}{b} < \binom{ca}{cb}$ for $c > 1$ and the fact that $m/(m-f) \leq 1+\varepsilon$ for $q$ large enough.
	We conclude that the number of independent sets of size $m \geq 28q^2\ln^4q$ is at most
	\begin{align*}
		\binom{\qbinom{6}{3}}{f}\binom{(1+\varepsilon)^2R}{m} &\leq \qbinom{6}{3}^f\binom{(1+\varepsilon)^2R}{m} \\
		&\leq (2q^9)^{m/\ln^2 q}\binom{(1+\varepsilon)^2R}{m} \\
		&\leq \binom{(1+20\varepsilon)(1+\varepsilon)^2R}{m},
	\end{align*}
	where we used that
	\[(2q^9)^{1/\ln^2q} \leq (q^{10})^{1/\ln^2 q} = e^{10/\ln q} \leq 1+\frac{20}{\ln q},\]
	as $e^x \leq 1+2x$ for sufficiently small $x$. Finally as $\varepsilon = (\ln q)^{-1}$, it follows that $(1+20\varepsilon)(1+\varepsilon)^3(q^3+1) = (1+o(1))(q^3+1)$ as $q \to \infty$.
\end{proof}

\section{The container method for caps}\label{sec:containercaps}

As mentioned in the introduction, the container method for hypergraphs was developed by Balogh, Morris and Samotij \cite{BMS} and Saxton and Thomason \cite{SaxtonThomason}. As far as we are aware, the only application in finite geometry of this method has been to count arcs in the plane by Roche-Newton and Warren \cite{Roche-Newton/Warren}, see also subsequent results by Bhowmick and Roche-Newton \cite{Bhowmick/Roche-Newton}, and to count arcs in higher dimensional spaces by Chen, Liu, Nie and Zeng\cite{CLNZ}. 

Many objects in finite geometry can be phrased as independent sets in appropriate graphs, whereas objects corresponding to independent sets in hypergraphs of higher uniformity are more rare. Arcs and caps are instances of such objects. \\ 

{\em A cap} in $\PG(r,q)$ is a set of points, no three of which are collinear. Elementary counting arguments show that a cap in $\PG(2,q)$ has at most $q+1$ points when $q$ is odd, and at most $q+2$ points when $q$ is even. A cap in $\PG(3,q)$ has at most $q^2+1$ points; these are the only known sharp bounds for the size of a cap in $\PG(r,q)$. When $r \geq 4$, the situation is different. In this case, an elementary argument shows that a cap can have at most $\qbinom{r}{1}+1 = (1+o(1))q^{r-1}$ points, but it is a long-standing open problem to determine the (non)-existence of caps in $\PG(r,q)$ of size $\Omega(q^{r-1})$. \\

In this section, we will use the hypergraph $\cH_q$ whose vertices are the points of $\PG(r,q)$, $r \geq 3$, and whose hyperedges are given by collinear triples. Then $\cH_q$ is a $3$-uniform hypergraph on $\qbinom{r+1}{1}$ vertices and has $\frac{1}{6}\qbinom{r+1}{1}\cdot(\qbinom{r+1}{1}-1)\cdot(q-1) = \Theta(q^{2r+1})$ hyperedges. It should be clear that an independent set in $\mathcal{H}_q$ corresponds to a cap of $\PG(r,q)$ and vice versa. \\

\textbf{Notation.} We denote by $V(\cH)$ and $E(\cH)$ the vertex and edge set of a hypergraph $\cH$ respectively, and $v(\cH)$ and $e(\cH)$ their respective sizes. Moreover we denote by $\Delta_\ell(\cH)$ the maximum degree of a set of $\ell$ vertices of $\cH$, that is,
\[\Delta_\ell(\cH) = \max\{d_\cH(A) : A \subset V(\cH), |A| = \ell \},\]
where $d_\cH(A) = |\{B \in E(\cH): A \subset B\}|$. The average degree is denoted by $d(\cH)$ and the subhypergraph of $\cH$ induced by a $S \subset V(\cH)$ is denoted by $\cH[S]$. If $A$ is a set, then $\cP(A)$ denotes its power set. \\

The following container theorem is due to Saxton and Thomason \cite[Theorem 5.1]{ST16}, which we only state for $3$-uniform hypergraphs in order to reduce the complexity.

\begin{theorem}\label{thm:saxtonthomason}
	Let $\cH$ be a $3$-uniform hypergraph on $n$ vertices and let $e_0 \leq e(\cH)$. Let $\tau:\cP(V(\cH))\mapsto \mathbb{R}$ be a function satisfying for each $U \subset V(\cH)$ with $e(\cH[U]) \geq e_0$, that $\tau(U) < 1/2$ and moreover 
	\begin{align}\label{eq:codegreecondition}
	\frac{\Delta_2(\cH[U])}{d(\cH[U]) \cdot \tau(U)}+\frac{1}{2d(\cH[U])\cdot \tau(U)^2} \leq \frac{1}{288}.
	\end{align}
	For $e_0 \leq m \leq e(\cH)$ define
	\begin{align*}
	f(m) &= \max\{|U|\tau(U)\ln (1/\tau(U)) \, : \, U \subset V(\cH), e(\cH[U]) \geq m\} \\
	\tau^* &= \max\{\tau(U) \, : \, U \subset V(\cH), e(\cH[U]) \geq e_0\}.
	\end{align*}
	Let $s = \ln(e(\cH)/e_0)/\ln(12/11)$. Then there exists a collection $\cC \subset \cP(V(\cH))$, a function $b:\cP(V(\cH)) \mapsto \cC$ and a constant $c$ such that 

	\begin{enumerate}
		\item for every independent set $I$ there exists $F \subset I$ with $I \subset b(F) \in \cC$ and $|F| \leq c(s+1)\tau^*n$,
		\item $e(\cH[C]) \leq e_0$ for all $C \in \cC$,
		\item $\ln |\cC| \leq c \sum_{0 \leq i < s}f((12/11)^ie_0))$.
	\end{enumerate}
\end{theorem}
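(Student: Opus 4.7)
The plan is to follow the iterative algorithmic strategy introduced by Saxton and Thomason for the general hypergraph container lemma. The fundamental idea is to design a deterministic procedure $\mathrm{Alg}$ that on input any independent set $I \subseteq V(\cH)$ outputs a pair $(F, b(F))$ with $F \subset I \subset b(F)$ such that $b(F)$ depends only on $F$ (not on all of $I$), $|F|$ is small, and $e(\cH[b(F)]) \leq e_0$. Once such an algorithm is in hand, the collection $\cC = \{b(F) : F \in \mathrm{image}\}$ automatically satisfies properties $(1)$ and $(2)$; property $(3)$ then follows from a bound on the number of possible fingerprints $F$.

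First I would isolate a one-step building block. Given a current working set $U$ with $e(\cH[U]) \geq e_0$, one defines a weighted probability distribution on $V(U)$ whose weights depend only on local degree information computed from $\cH[U]$. Vertices are processed by a ``scythe'' in a canonical (weight-induced) order; a vertex $v$ is added to the fingerprint if $v \in I$, and it is \emph{deleted} from $U$ otherwise provided its current co-neighbourhood in $U$, restricted to the already-selected fingerprint vertices, is too large. Because the vertex order and the deletion rule depend only on the evolving set $U$ (not on $I$), the output $U' = U'(F_{\text{step}})$ depends only on the fingerprint selected in this step. The quantitative content of the step is that, under hypothesis \eqref{eq:codegreecondition}, one can show $e(\cH[U']) \leq (11/12)\, e(\cH[U])$, while $|F_{\text{step}}| = O(\tau(U)\,|U|)$. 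For $3$-uniform hypergraphs this is precisely where the two terms $\Delta_2(\cH[U])/(d(\cH[U])\tau(U))$ and $1/(2d(\cH[U])\tau(U)^2)$ appear: the first controls how often a single deletion ``kills'' a pair that could still form an edge with a future fingerprint vertex, the second controls how often two deletions conspire to kill the last vertex of an edge.

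Next, iterate. Set $U_0 = V(\cH)$ and, as long as $e(\cH[U_i]) \geq e_0$, apply the one-step procedure to obtain $(F_{i+1}, U_{i+1})$. Since the edge count contracts by a factor of $12/11$ per phase, the process terminates after at most $s = \ln(e(\cH)/e_0)/\ln(12/11)$ steps; define $b(F) = U_s$ for $F = F_1 \cup \dots \cup F_s$. The size bound $|F| \leq c(s+1)\tau^* n$ follows by summing the per-step bound $O(\tau(U_i)|U_i|) \leq O(\tau^* n)$. For the container count, at phase $i$ the set $U_i$ has edge count at least $(12/11)^{s-i} e_0$, so after relabelling the phases $i \mapsto s-i$ the fingerprint at stage $i$ lives in some $U$ with $e(\cH[U]) \geq (12/11)^i e_0$ and has size at most $\tau(U)|U|$. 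An entropy-style estimate $\ln\binom{|U|}{\tau(U)|U|} \leq |U|\tau(U)\ln(e/\tau(U)) \lesssim f((12/11)^i e_0)$, summed over $i$, yields item $(3)$.

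The principal technical obstacle will be the one-step contraction estimate: showing that after the scythe the quantity $e(\cH[U])$ drops by a definite multiplicative factor while the new $U$ still contains $I$. This requires tracking a supermartingale in which, as each vertex is processed, one accounts for both the expected number of hyperedges destroyed by its deletion and the expected number of ``newly uncovered'' edges among the remaining vertices. The precise constant $1/288$ in \eqref{eq:codegreecondition} is the calibration that makes this supermartingale argument close, and it is essentially the only place the $3$-uniformity enters beyond the bookkeeping. Everything else, including the final summation yielding $\ln|\cC|$, is combinatorial bookkeeping from Saxton–Thomason which I would transcribe after establishing the one-step estimate.
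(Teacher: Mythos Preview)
The paper does not prove this statement at all: Theorem~\ref{thm:saxtonthomason} is quoted verbatim (specialised to $3$-uniform hypergraphs) from Saxton and Thomason \cite[Theorem~5.1]{ST16} and is used as a black box. There is therefore no ``paper's own proof'' to compare your proposal against.

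That said, your outline is a faithful high-level description of the Saxton--Thomason argument behind the cited result: the one-step scythe procedure producing a container whose edge count drops by a constant factor, iteration for $s \approx \ln(e(\cH)/e_0)/\ln(12/11)$ rounds, and the entropy bound on the number of fingerprints. If you were asked to supply a proof rather than cite one, the main places where your sketch would need to be fleshed out are (i) the precise definition of the scythe order and deletion rule so that the output genuinely depends only on the fingerprint, and (ii) the edge-count contraction inequality, which in \cite{ST16} is handled by a careful averaging argument rather than an explicit supermartingale. But for the purposes of this paper none of that is needed: the correct thing to do is exactly what the paper does, namely invoke \cite[Theorem~5.1]{ST16}.
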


%
%
%
\begin{remark}
	A few remarks are in order.
	\begin{enumerate}
		\item The elements of $\cC$ are the \textit{containers}, whereas the sets $F$ are informally referred to as the \textit{fingerprints}, since any independent set contains a unique fingerprint\footnote{This is in fact only true after we fix an ordering of the vertices of $\cH$, which is necessary for the proof of the container theorem.} which determines the container it is contained in. Moreover, the containers only depend on the fingerprints, and not on the actual independent sets they came from.
		
		\item The constant $c$ appearing in the statement of the theorem can be taken to be $c = 62208$, but the precise value of the constant in the conclusion will not matter, only that it is independent of the parameters of $\cH$. The same goes for the constant $11/12$ appearing in the statement.
		
		\item This version of the container lemma is rather technical, but the need for containers of size $o(v(\cH))$ necessitates this. The original container theorems on the other hand (see \cite[Theorem 3.4]{SaxtonThomason} or \cite[Theorem 2.2]{BMS}), provide containers whose size is a constant proportion of $v(\cH)$. The key insight is that after one application of these theorems, we obtain a set of containers $\cC_1$ and then we iterate the argument again on $\cH[C]$ for all $C \in \cC_1$. In this way, after an appropriate number of iterations, we will in fact be able to obtain containers in $\cH_q$ of size $\Theta(q^{r-1})$. Remark that also \cite[Corollary 3.6]{SaxtonThomason} is designed for a constant number of iterations and works optimally when the `shrinkage factor' is constant. This means that the results one obtains from applying this theorem directly to $\cH_q$ (without iteration) are suboptimal. The reason is that the behaviour of the parameters $\Delta_\ell(\cH_q[C])$ changes as as $C$ becomes increasingly smaller, and this needs to be taken into account.		
		\item We will couple the second item in the conclusion, giving an upper bound on the number of edges within a container, with a supersaturation result to obtain an upper bound on the sizes of the containers. This will immediately imply an upper bound on the total number of independent sets of sufficiently large size $m$: for if $|C| \leq \alpha$ for all $C \in \cC$, then this number is at most $|\cC|\binom{\alpha}{m}$.
	\end{enumerate}
\end{remark}

Lemma \ref{lem:manytriples}, whose proof is similar to \cite[Lemma 2.2]{Bhowmick/Roche-Newton}, will provide the required supersaturation result. We first show the following:
\begin{lemma} \label{Jensen} Let $s,y$ be a positive integers and let $w_1,\ldots,w_s$ be positive real numbers with $\sum_{i=1}^{s}w_i=sy-1$. Then
\begin{align} \sum_{i=1}^s \binom{w_i}{2}\geq (s-1)\binom{y}{2}+\binom{y-1}{2}.
\end{align}
\end{lemma}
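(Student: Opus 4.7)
My plan is to treat the inequality as a discrete convexity statement for the function $f(x) = \binom{x}{2}$. Before getting started, I should flag that the inequality cannot hold for arbitrary positive reals $w_i$ as literally written: e.g.\ for $s=y=2$ and $w_1=w_2=3/2$ one computes $\sum\binom{w_i}{2}=\tfrac{3}{4} < 1 = (s-1)\binom{y}{2}+\binom{y-1}{2}$. I will therefore read the hypothesis as ``positive integers'', which matches the intended downstream application to collinear-triple counts and makes the label ``Jensen'' genuinely correct (this is a discrete convexity bound rather than Jensen over $\R$).

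The first step is a clean algebraic reduction. Substituting $w_i = y - \delta_i$ with $\delta_i \in \Z$, the constraint $\sum_i w_i = sy-1$ becomes $\sum_i \delta_i = 1$. Expanding $\binom{y-\delta_i}{2} = \binom{y}{2} - \bigl(y-\tfrac{1}{2}\bigr)\delta_i + \tfrac{1}{2}\delta_i^2$ and summing, the linear term contributes exactly $-(y-\tfrac{1}{2})$, so
\[\sum_{i=1}^s \binom{w_i}{2} \;=\; s\binom{y}{2} - \bigl(y-\tfrac{1}{2}\bigr) + \tfrac{1}{2}\sum_{i=1}^s \delta_i^2.\]
Using the identity $\binom{y}{2}-\binom{y-1}{2}=y-1$, the right-hand side of the lemma equals $s\binom{y}{2} - (y-1)$, so after cancellation the claim collapses to the single inequality $\sum_i \delta_i^2 \geq 1$.

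The remaining obstacle is verifying this second-moment bound, and this is where integrality is essential: a naive Cauchy--Schwarz or Jensen step only yields $\sum \delta_i^2 \geq 1/s$, which is too weak for $s \geq 2$. However, for integer $\delta_i$ summing to $1$ at least one $\delta_i$ must be nonzero, hence has square at least $1$, so $\sum_i \delta_i^2 \geq 1$ follows immediately. Equality forces one $\delta_i = 1$ and the remaining $\delta_j = 0$, which translates back to the expected extremal configuration of $s-1$ values of $w_i$ equal to $y$ and a single value equal to $y-1$ --- exactly the ``near-equal'' partition one would anticipate from a smoothing argument of the form ``replace $(w_i,w_j)$ with $(w_i+1,w_j-1)$ whenever $w_j \geq w_i+2$''.
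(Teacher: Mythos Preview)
Your observation is spot-on: the lemma as stated (with real $w_i$) is false, and your counterexample $s=y=2$, $w_1=w_2=3/2$ demonstrates this cleanly. The paper's own proof silently makes the same integrality assumption you do---it asserts ``the left hand side is an integer'' and then rounds up, which is only valid when each $w_i\in\Z$. In the subsequent supersaturation lemma the $w_i$ are point counts on lines, so this reading is the intended one.

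Your argument is correct and takes a genuinely different route from the paper. The paper applies Jensen's inequality to the convex function $x\mapsto\binom{x}{2}$ to obtain $\sum_i\binom{w_i}{2}\ge s\binom{(sy-1)/s}{2}$, then verifies by direct computation that this Jensen lower bound lies strictly between $(s-1)\binom{y}{2}+\binom{y-1}{2}-1$ and $(s-1)\binom{y}{2}+\binom{y-1}{2}$, so taking the ceiling finishes. Your substitution $w_i=y-\delta_i$ instead collapses the claim to $\sum_i\delta_i^2\ge 1$ for integers $\delta_i$ summing to $1$, which is immediate since at least one $\delta_i$ is nonzero. This is more elementary---no convexity inequality, no ceiling computation---and it identifies the equality configuration $(y,\ldots,y,y-1)$ for free. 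The paper's version makes the ``Jensen'' label literal, but otherwise your route is at least as clean.
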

\begin{proof} 
Consider the function $f:\mathbb{R}\to \mathbb{R}: x\mapsto \binom{x}{2}=\frac{x(x-1)}{2}$, then $f$ is convex, and Jensen's inequality states \[\sum_{i=1}^s \binom{w_i}{2}\geq s\binom{\frac{sy-1}{s}}{2}.\]
The left hand side is an integer, therefore,  $\sum_{i=1}^s \binom{w_i}{2}\geq \lceil s\binom{\frac{sy-1}{s}}{2} \rceil$. It is easy to check that $s\binom{\frac{sy-1}{s}}{2}=\frac{s(y-1/s)(y-1-1/s)}{2}>(s-1)\frac{y(y-1)}{2}+\frac{(y-1)(y-2)}{2}-1$, so $\lceil s\binom{\frac{sy-1}{s}}{2} \rceil=(s-1)\binom{y}{2}+\binom{y-1}{2}$ and the statement follows.
\end{proof}
\begin{lemma}\label{lem:manytriples}
	Let $\mathcal{P}$ be a set of points in $\PG(r,q)$ of size $k\qbinom{r}{1}$ for some constant $k > 1$. Then we have
	\[ e(\cH)\geq \frac{k^2(k-1)}{6}\qbinom{r}{1}^2-\frac{k(k-1)}{3}\qbinom{r}{1}.\]
	Furthermore, if $|\cP| \geq 2\qbinom{r}{1}$ then $e(\cH_q(\cP)) \geq \frac{ |\cP|^3}{15\qbinom{r}{1}}$. 
\end{lemma}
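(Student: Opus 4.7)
My plan is to count collinear triples in $\cP$ by fixing a ``center'' point. Set $n := \qbinom{r}{1}$, which is also the number of lines through any point of $\PG(r,q)$, and write $|\cP| = kn$. For each $P \in \cP$, the $n$ lines through $P$ partition $\cP \setminus \{P\}$; letting $w_i^P$ ($i = 1, \ldots, n$) be the number of points of $\cP \setminus \{P\}$ on the $i$th line, we have $\sum_i w_i^P = kn-1$, and the number of collinear triples in $\cP$ containing $P$ equals $\sum_i \binom{w_i^P}{2}$. Summing over $P$ counts each collinear triple three times, so $3\, e(\cH) = \sum_{P \in \cP} \sum_{i=1}^n \binom{w_i^P}{2}$.

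The heart of the argument will be the supersaturation-type bound
\[\sum_{i=1}^n \binom{w_i^P}{2} \;\geq\; (n-1)\binom{k}{2}+\binom{k-1}{2} \qquad \text{for every } P \in \cP,\]
which is precisely the conclusion of Lemma \ref{Jensen} applied with $s=n$ and $y=k$. The subtlety is that Lemma \ref{Jensen} requires all entries to be positive, whereas some $w_i^P$ may vanish. My plan is to bridge this gap using the identity $\binom{a+b}{2} = \binom{a}{2}+\binom{b}{2}+ab$ for positive integers $a,b$: splitting any $w_i^P \geq 2$ into two positive parts strictly \emph{decreases} $\sum_i \binom{w_i^P}{2}$. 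Starting from the positive entries actually present (there are $s \leq n$ of them) and repeatedly splitting until exactly $n$ positive integer entries summing to $kn-1$ remain, Lemma \ref{Jensen} applies to the resulting configuration, and its lower bound also bounds the original sum from below.

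Summing this inequality over $P \in \cP$ and dividing by $3$, routine algebra using $\binom{k}{2}=k(k-1)/2$ produces
\[e(\cH) \;\geq\; \tfrac{1}{6}kn(k-1)(kn-2) \;=\; \tfrac{1}{6}k^2(k-1)\qbinom{r}{1}^2 - \tfrac{1}{3}k(k-1)\qbinom{r}{1},\]
which is the first claim. For the second claim, substituting $|\cP| = kn$ rewrites this bound as $e(\cH) \geq |\cP|(|\cP|-n)(|\cP|-2)/(6n)$. The hypothesis $|\cP| \geq 2n$ gives $|\cP|-n \geq |\cP|/2$, while the standing assumption $r \geq 3$ forces $n \geq q^2+q+1 \geq 7$, so $|\cP| \geq 14$ and hence $|\cP|-2 \geq 4|\cP|/5$. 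Multiplying these two estimates yields the desired $e(\cH) \geq |\cP|^3/(15n)$.

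The main obstacle will be carefully justifying the splitting reduction in the second paragraph: one has to verify that it always terminates with a configuration matching the precise hypotheses of Lemma \ref{Jensen} ($n$ positive integer entries summing to $nk-1$), and that the reduction is monotone in the right direction. Everything else amounts to straightforward algebraic manipulation and careful constant-tracking in the final estimate to land on exactly the factor $1/15$.
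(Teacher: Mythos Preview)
Your proof is correct and follows essentially the same route as the paper: count collinear triples via $3\,e(\cH) = \sum_{P} \sum_i \binom{w_i^P}{2}$, apply Lemma~\ref{Jensen} with $s = \qbinom{r}{1}$ and $y = k$, and then carry out equivalent algebra for both assertions. Your splitting reduction to handle possibly-vanishing $w_i^P$ is extra care that the paper simply glosses over (it applies Lemma~\ref{Jensen} directly despite stating it for positive reals), though in fact the proof of Lemma~\ref{Jensen} via Jensen plus integer-rounding works verbatim for non-negative integers, so the reduction, while valid, is not strictly necessary.
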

\begin{proof} Recall that every point $Q$ of $\PG(r,q)$ lies on $\qbinom{r}{1}$ lines. We will label the lines through $Q$ as $\ell^Q_1,\ell^Q_2,\ldots$.
	Now let $Q\in \mathcal{P}$. Denoting the number of points $R\neq Q$ of $\mathcal{P}$ on the line $\ell_i^Q$ by $w_j^Q$, we give the point $Q$ weight $$w(Q)=\frac{1}{3}\sum_{i=1}^{\qbinom{r}{1}}{w_i^Q\choose 2}.$$
	We see that $3w(Q)$ is the number of collinear triples in $\mathcal{P}$ containing $Q$, and hence, double counting the set $\{(Q,t)\,:\, Q\in \mathcal{P}\cap t,\, t\in \cT\}$, we find that
	$$3\cT=\sum_{Q\in \mathcal{P}}3w(Q),$$ or equivalently,
	$$\cT=\sum_{Q\in \mathcal{P}}w(Q).$$
	
	Note that \begin{align*}
	\sum_{j=1}^{\qbinom{r}{1}}w_j^Q
	&=\sum_{j=1}^{\qbinom{r}{1}}(\vert \ell_j^Q\cap \mathcal{P}\vert -1) =|\mathcal{P}|-1,
	\end{align*}
	which is independent of the choice of $Q\in \mathcal{P}$.
	
	
	Using Lemma \ref{Jensen}, with $s=\qbinom{r}{1}$ and $y=k$, we find that
	
	$$\cT=\sum_{Q\in \mathcal{P}}w(Q)=\sum_{Q\in \mathcal{P}}\left(\frac{1}{3}\sum_{i=1}^{\qbinom{r}{1}}{w_i^Q\choose 2}\right)\geq |\mathcal{P}| \cdot \frac{1}{3}\left(\frac{k(k-1)}{2}\qbinom{r}{1}-(k-1)\right).$$
	
	For the last part, we need to check that $\frac{1}{3}k\qbinom{r}{1}\left(\frac{k(k-1)}{2}\qbinom{r}{1}-(k-1)\right)\geq k^3\qbinom{r}{1}^2/15$.
	It is not too hard to check that this is equivalent too 
	\[(3k^2-5k)\qbinom{r}{1}-10k+10\geq 2\] which is satisfied if $k\geq 2$.
\end{proof}

\begin{cor}\label{cor:edgestosize}
	Let $C \subset V(\cH_q)$ be such that $e(\cH_q[C]) \leq  \frac{k^2(k-1)}{6}\qbinom{r}{1}^2 -\frac{k(k-1)}{3}\qbinom{r}{1}$ for some $k > 1$, then $|C| \leq k\qbinom{r}{1}$.
	In particular, if $C \subset V(\cH_q)$ be such that $e(\cH_q[C]) \leq 4\qbinom{r}{1}^2 $, then $|C| \leq 4\qbinom{r}{1}$.
\end{cor}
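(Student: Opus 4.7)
The plan is to derive \Cref{cor:edgestosize} as the contrapositive of \Cref{lem:manytriples}. Write $g(k) := \frac{k^2(k-1)}{6}\qbinom{r}{1}^2 - \frac{k(k-1)}{3}\qbinom{r}{1}$ for the lower bound produced by \Cref{lem:manytriples} when $|\cP| = k\qbinom{r}{1}$. The hypothesis is $e(\cH_q[C]) \leq g(k)$, and the goal is $|C| \leq k\qbinom{r}{1}$.

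First I would assume for contradiction that $|C| > k\qbinom{r}{1}$, set $k' := |C|/\qbinom{r}{1} > k$, and apply \Cref{lem:manytriples} (with ratio $k'$) to obtain $e(\cH_q[C]) \geq g(k')$. The main step is then to verify that $g$ is strictly increasing for $k \geq 1$. Factoring, $g(k) = \frac{k(k-1)\qbinom{r}{1}}{6}\bigl(k\qbinom{r}{1}-2\bigr)$; since $r \geq 3$ gives $\qbinom{r}{1} \geq q^2+q+1$, every factor is positive and non-decreasing on $[1,\infty)$, with strict increase in $k$. Hence $g(k') > g(k)$, contradicting $e(\cH_q[C]) \leq g(k)$.

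For the "in particular" statement, plug in $k=4$: $g(4) = 8\qbinom{r}{1}^2 - 4\qbinom{r}{1}$, and $4\qbinom{r}{1}^2 \leq g(4)$ amounts to $\qbinom{r}{1} \geq 1$, which always holds. So $e(\cH_q[C]) \leq 4\qbinom{r}{1}^2$ implies $e(\cH_q[C]) \leq g(4)$, and the general case yields $|C| \leq 4\qbinom{r}{1}$.

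The only subtlety is that $k'$ need not be an integer while \Cref{Jensen} is stated for integer $y$. This is not a real obstacle: the substance of \Cref{lem:manytriples} is Jensen's inequality applied to the convex function $\binom{x}{2}$, which works for any positive real average; the integer rounding in \Cref{Jensen} only sharpens constants that are absorbed into $g$. If one wishes to sidestep this entirely, one can simply pass to a subset $C^* \subset C$ of integer cardinality strictly exceeding $k\qbinom{r}{1}$ and apply the lemma directly to $C^*$, noting that $e(\cH_q[C]) \geq e(\cH_q[C^*])$.
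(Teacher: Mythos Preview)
Your proof is correct and is exactly the contrapositive argument the paper has in mind (the corollary is stated without proof, as an immediate consequence of \Cref{lem:manytriples}); the monotonicity check for $g$ and the verification that $4\qbinom{r}{1}^2 \leq g(4)$ are the right details to fill in. One small remark: your alternative ``subset'' workaround at the end does not actually sidestep the integrality issue, since passing to any $C^*\subset C$ still gives $k' = |C^*|/\qbinom{r}{1}$ non-integer in general --- but this is harmless, because your primary observation that Jensen's inequality for $\binom{x}{2}$ holds for arbitrary real averages already resolves the matter.
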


We are now in the position to build our family of containers, restricting ourselves to the case $r \geq 3$ as $r = 2$ was handled in \cite{Bhowmick/Roche-Newton,Roche-Newton/Warren}. Our goal will be to end up with containers of size at most $(1+\gamma)\qbinom{r}{1}$ for some $\gamma > 0$. Depending on the application, we will choose $\gamma$ to be constant or $\gamma = o(1)$. The latter is in general best possible since for $r=3$ there are independent sets of size $q^{r-1}+1$ in $\cH_q$. The proof below is heavily inspired by \cite[Theorem 1.10]{ST16}. 

\begin{theorem}\label{thm:containersHq}
	Let $r \geq 3$ and $\cH_q$ be the 3-uniform hypergraph as defined before.
	For all sufficiently large $q$, there exists a collection $\cC \subset \cP(V(\cH_q))$, a function $b:\cP(V(\cH_q))\to \cC$, and absolute constants $c_1,c_2$ such that
	
	\begin{enumerate}
		\item for every independent set $I$ in $\cH_q$ there exists $F \subset I$ with $I \subset b(F) \in \cC$ and $|F| \leq c_1 q^{(r-1)/2}\ln q$, 
				\item $e(\cH_q[C]) \leq 4\qbinom{r}{1}^2$ for all $C \in \cC$,
		\item $\ln |\cC| \leq c_2 q^{(r-1)/2}\ln^2 q$.
	\end{enumerate}
\end{theorem}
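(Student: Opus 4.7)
The proof proceeds by iterating \Cref{thm:saxtonthomason} $\Theta(\log q)$ times, with carefully chosen parameters that shrink the containers by a constant factor at each step, as foreshadowed in the remarks following that theorem. Fix a constant $c > 1$ (say $c = 2$) and define $u_0 = n$ and $u_i = u_{i-1}/c$, stopping at the smallest $T$ with $u_T \leq 4\qbinom{r}{1}$; since $n/\qbinom{r}{1} = \Theta(q)$, we have $T = O(\log q)$. For $i < T$ we set $e_0^{(i)} = u_i^3/(15\qbinom{r}{1})$, and $e_0^{(T)} = 4\qbinom{r}{1}^2$. Starting from $\cC^{(0)} = \{V(\cH_q)\}$, at iteration $i$ we apply \Cref{thm:saxtonthomason} to $\cH_q[C]$ for each $C \in \cC^{(i-1)}$ with threshold $e_0^{(i)}$ and a suitably defined $\tau^{(i)}$, obtaining a family $\cC_C^{(i)}$; we then take $\cC^{(i)} = \bigcup_C \cC_C^{(i)}$ and $\cC = \cC^{(T)}$. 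The total fingerprint for an independent set $I$ is the union of the fingerprints produced at each iteration, and $b$ is the composition of the maps produced at each level.

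For $U$ with $e(\cH_q[U]) \geq e_0^{(i)}$ we set $\tau^{(i)}(U) = \max\bigl( A q|U|/e(\cH_q[U]),\ B\sqrt{|U|/e(\cH_q[U])}\bigr)$, where $A, B$ are absolute constants chosen large enough so that \eqref{eq:codegreecondition} holds and $\tau^{(i)}(U) < 1/2$ for $q$ large. Three key estimates drive the analysis. First, $e(\cH_q[C^{(i)}]) \leq e_0^{(i)}$ combined with \Cref{lem:manytriples} (for $i < T$) or \Cref{cor:edgestosize} (for $i = T$) bounds $|C^{(i)}| \leq u_i$, in particular $|C^{(T)}| \leq 4\qbinom{r}{1}$ with $e(\cH_q[C^{(T)}]) \leq 4\qbinom{r}{1}^2$, giving property 2. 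Second, the ratio $e_0^{(i-1)}/e_0^{(i)} = c^3$ is a constant, so $s^{(i)} = O(1)$ at each step. Third, splitting $U$ into the supersaturation regime ($|U| \geq 2\qbinom{r}{1}$, where \Cref{lem:manytriples} gives $\sqrt{|U|/e(\cH_q[U])} \leq \sqrt{15\qbinom{r}{1}}/|U|$) and the small-$U$ regime (where the constraint $e(\cH_q[U]) \geq e_0^{(i)}$ gives $\sqrt{|U|/e(\cH_q[U])} \leq \sqrt{|U|/e_0^{(i)}}$) shows that $\tau^{(i)\,*} = O(\sqrt{\qbinom{r}{1}}/u_i)$ and $f^{(i)}(m) = O(\sqrt{\qbinom{r}{1}}\log q)$ uniformly in $m \geq e_0^{(i)}$.

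Summing across the $T = O(\log q)$ iterations, using $|C^{(i-1)}| \leq u_{i-1} = c\, u_i$, yields
\[|F| \leq \sum_{i=1}^{T}|F^{(i)}| \leq \sum_{i=1}^{T} O\bigl((s^{(i)}+1)\tau^{(i)\,*} u_{i-1}\bigr) = O\bigl(T\sqrt{\qbinom{r}{1}}\bigr) = O\bigl(q^{(r-1)/2}\log q\bigr),\]
and
\[\ln|\cC| \leq \sum_{i=1}^{T} \max_{C\in\cC^{(i-1)}}\ln|\cC_C^{(i)}| \leq \sum_{i=1}^{T} O\bigl(\sqrt{\qbinom{r}{1}}\log q\bigr) = O\bigl(q^{(r-1)/2}\log^2 q\bigr),\]
as required.

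The main obstacle will be the careful verification of the third estimate in the second paragraph. The crucial feature is that the choice $e_0^{(i)} = u_i^3/(15\qbinom{r}{1})$ makes the two upper bounds on $\sqrt{|U|/e(\cH_q[U])}$ match precisely at $|U| = u_i$, so that $\tau^{(i)\,*} u_{i-1} = O(\sqrt{\qbinom{r}{1}})$ is bounded independently of $i$; a looser coupling of $e_0^{(i)}$ to $u_i$ either blows up the per-iteration fingerprint by a factor of $q$ or forces the number of iterations to grow beyond $O(\log q)$. One must also check that the $q|U|/e(\cH_q[U])$ term is not the dominant contribution to $\tau^{(i)}$ in our range of $|U|$, which holds precisely because $r \geq 3$.
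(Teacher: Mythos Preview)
Your approach is correct in outline but takes a genuinely different route from the paper. The paper applies \Cref{thm:saxtonthomason} \emph{once} to $\cH_q$ with $e_0 = 4\qbinom{r}{1}^2$, choosing $\tau(U) = c_2\sqrt{|U|/e(\cH_q[U])}$ globally for every $U$ with $e(\cH_q[U]) \geq e_0$. The supersaturation bound $|U| \leq (15\qbinom{r}{1}m)^{1/3}$ is then used to show $|U|\tau(U) = O(q^{(r-1)/2})$ and $f(m) = O(q^{(r-1)/2}\ln q)$ uniformly in $m$, so that the single built-in sum $\sum_{0\leq i < s} f((12/11)^i e_0)$ with $s = O(\ln q)$ already gives both conclusions directly. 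In other words, the iterated shrinking you set up externally is precisely what \Cref{thm:saxtonthomason} is designed to do internally via the $U$-dependent $\tau$ --- this is exactly the point of the third remark following that theorem, which you seem to have read as an instruction to iterate the theorem itself.

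Your external iteration nonetheless works: by arranging $e_0^{(i-1)}/e_0^{(i)} = c^3$ you force $s^{(i)} = O(1)$ at each step, so that $T = O(\log q)$ external applications with constant internal depth replace one application with $O(\log q)$ internal depth. The bookkeeping is heavier but equivalent. One small imprecision: in the supersaturation regime your stated bound $\sqrt{15\qbinom{r}{1}}/|U|$ alone does not give $\tau^{(i)\,*} = O(\sqrt{\qbinom{r}{1}}/u_i)$, since $|U|$ could be as small as $2\qbinom{r}{1}$; you must also invoke the constraint $e(\cH_q[U]) \geq e_0^{(i)}$ on the range $2\qbinom{r}{1} \leq |U| \leq u_i$ (indeed, the minimum of your two bounds is maximised at $|U| = u_i$, as you note later). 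With that adjustment the estimate $\tau^{(i)\,*} \leq B\sqrt{15\qbinom{r}{1}}/u_i$ holds. The paper's single-shot argument avoids this case split by working directly with $|U|\tau(U)$ rather than $\tau^*$, which is perhaps the cleanest conceptual gain of its approach.
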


\begin{proof}
	We will apply \Cref{thm:saxtonthomason} to $\cH_q$ with $e_0 = 4\qbinom{r}{1}^2$. So let $U \subset V(\cH_q)$ with $e(\cH_q[U]) =: m \geq e_0$ and denote $|U| = u$. By \Cref{lem:manytriples} we see that if $u \geq 2\qbinom{r}{1}$ then $m \geq u^3/15\qbinom{r}{1}$ or equivalently $u \leq (15\qbinom{r}{1}m)^{1/3}$. In the other case we have
	\[u \leq 2\qbinom{r}{1} = \left(8\qbinom{r}{1}^3\right)^{1/3} \leq \left(15\qbinom{r}{1}m\right)^{1/3},\]
	and so the inequality holds in either case.
	
	Now our objective is to find suitable $\tau(U)$ such that \eqref{eq:codegreecondition} holds. Since $\Delta_2(\cH[U]) \leq q$ for any $U \subset V(\cH_q)$ this condition is
	\begin{align*}
	\frac{qu}{m \cdot \tau(U)}+\frac{u}{2m\cdot \tau(U)^2} \leq \frac{1}{288}.
	\end{align*}
	This inequality is satisfied if we set
	\[\tau(U) = \max\left\{576\frac{qu}{m},18\left(\frac{u}{m}\right)^{1/2}\right\},\]
	as the left-hand side can then be bounded using both lower bounds for $\tau(U)$:
	\begin{align*}
	\frac{qu}{m \cdot \tau(U)}+\frac{u}{2m\cdot \tau(U)^2} \leq \frac{1}{576} + \frac{1}{648} < \frac{1}{288}.
	\end{align*}
	
	This definition of $\tau(U)$ makes checking the condition easy, and it is clear that the left hand side of the inequality will only decrease when we increase any of the constants $576$ and $18$ in the definition of $\tau(U)$. In fact, we will see that by appropriately increasing the second constant, the maximum will always be attained in the second expression. So replacing the constant $18$ by $18c_1$, $c_1 \geq 1$, we can rewrite the condition that the second expression is the largest:
	\begin{align}
	576\frac{qu}{m} \leq 18c_1\left(\frac{u}{m}\right)^{1/2}& \iff 32q\left(\frac{u}{m}\right)^{1/2} \leq c_1\nonumber \\
	&\iff 1024q^2u \leq c_1^2m.\label{e1}
	\end{align}
	Using  $u \leq (15\qbinom{r}{1}m)^{1/3}$ a sufficient condition for \eqref{e1} to be satisfied is that
	\begin{align*}
	1024q^2(15\qbinom{r}{1}m)^{1/3} \leq c_1^2m &\iff 15\cdot 2^{30} q^6\qbinom{r}{1} \leq c_1^6m^2,
	\end{align*} 
	which in its turn would be satisfied, using $m \geq e_0 = 4\qbinom{r}{1}^{2}$, if 
	\begin{align*}
	15\cdot 2^{30}q^6\leq 16c_1^6\qbinom{r}{1}^3.
	\end{align*}
	Since $q^6 \leq \qbinom{r}{1}^3$, using $r \geq 3$, clearly $c_1 = 2^5$ suffices. In conclusion, we can set $\tau(U) = c_2\left(\frac{u}{m}\right)^{1/2}$ for some large constant $c_2$ (e.g. $18\cdot32$) and \eqref{eq:codegreecondition} will be satisfied.
	
	Morever, $(u/m)^3 \leq 15\qbinom{r}{1}/m^{2} \leq (15/16)\cdot \qbinom{r}{1}^{-3}$ can be made arbitrarily small by choosing sufficiently large $q$, so in particular $\tau(U) < 1/2$ for sufficiently large $q$. The conditions of \Cref{thm:saxtonthomason} are hence satisfied, and we obtain a constant $c$ and a set of containers $\cC$ with the properties mentioned in the conclusion of the theorem. We will now calculate what these parameters are. \\
	
	First of all,
	\begin{align*}
	u\tau(U)\ln(1/\tau(U)) &\leq c_2\frac{u^{3/2}}{m^{1/2}}\ln\left(\frac{m^{1/2}}{c_2u^{1/2}}\right) \\
	&\leq c_2\frac{(15\qbinom{r}{1}m)^{1/2}}{m^{1/2}}\ln\left(\frac{m^{1/2}}{c_2(15\qbinom{r}{1}m)^{1/6}}\right) \\
	&\leq c_3q^{(r-1)/2}\ln\left(m^{1/3}q^{-(r-1)}\right) =: f(m),
	\end{align*}
	for sufficiently large $q$, where the first inequality holds since $\tau\ln(1/\tau)$ is an increasing function of $\tau$ when $\tau < 1/e$, the second inequality holds since $u^{3/2}\ln(m^{1/2}/cu^{1/2})$ is an increasing function of $u$ when $u < m/c^2e^{2/3}$, 

	and the third inequality holds for a sufficiently large constant $c_3$. We can then compute the parameter $s$:

	\begin{align*}
	s = c_4\ln\left(\frac{e(\cH_q)}{e_0}\right) \leq c_4\ln\left(\frac{q^{2r+1}}{q^{2r-2}}\right) \leq c_5\ln q,
	\end{align*}
	for some constant $c_5$ and sufficiently large $q$.
	
	It follows that, setting $\alpha = 12/11$,
	\begin{align*}
	\ln|\cC| &\leq \sum_{i=0}^{s}f(\alpha^i e_0) \\
	&\leq c_3q^{(r-1)/2}\sum_{i=0}^{s}\ln\left(\alpha^{i/3}e_0^{1/3}q^{-(r-1)}\right) \\
	&\leq c_6q^{(r-1)/2}\ln^2 q,
	\end{align*}
	for some constant $c_6$, where we used that both $s$ and the terms appearing in the sum are at most a constant times $\ln q$. This proves the bound on the number of containers.
	
	Finally, using the remark after the \Cref{thm:saxtonthomason} on the size of the fingerprints, we can also bound the size of the fingerprints:
	\begin{align*}
		|U|\tau(U) &= c_2\left(\frac{u^3}{m}\right)^{1/2} \\
		&\leq c_2\left(\frac{15\qbinom{r}{1}m}{m}\right)^{1/2} \\
		&\leq c_7q^{(r-1)/2},
	\end{align*}
	for some constant $c_7$, so that $g(m) \leq c_7q^{(r-1)/2}$ for all $m \geq e_0$. It follows that
	\begin{align*}
	|F| &\leq c\sum_{0 \leq i < s}g((12/11)^ie_0) \\
	&\leq c \cdot s \cdot c_7q^{(r-1)/2} \\
	&\leq c_8q^{(r-1)/2}\ln q, 
	\end{align*}
	for some constant $c_8$ as $s = O(\ln q)$.
\end{proof}

\subsection{First application: caps in random subsets of $\PG(r,q)$}

A very influential research line in the last few years has been the study of random Tur\'an numbers. The problem generalises the classical Tur\'an problem which asks for the maximal number of edges in an $H$-free subgraph of $K_n$, for some fixed graph $H$. Instead, we change the host graph to be the binomial random graph $G(n,p)$. Initial groundbreaking results in this setting have been obtained by Conlon and Gowers \cite{ConlonGowers} and Schacht \cite{Schacht} independently. It was later shown that the same results could be obtained by the container method \cite{BMS,SaxtonThomason}, which was one of the major driving factors behind its development. Since then, this problem and generalisations in different settings have been studied extensively, typically stating the problem as finding sharp bounds for the largest independent set in a random sub(hyper)graph of a given (hyper)graph. One such example which is relevant to our work includes the size of the largest arc in a random subset of $\PG(r,q)$ \cite{CLNZ}. \\

The problem statement in the case of caps in $\PG(r,q)$ is the following: denote by $\cH_q(p)$ the random subhypergraph of $\cH_q$, obtained by sampling every point in $\PG(r,q)$ (i.e. every vertex in $\cH_q$) independently and uniformly with probability $p$. What is then the behaviour of the independence number $\alpha(\cH_q(p))$ as $p = p(q)$ varies? With the help of \Cref{thm:containersHq}, we can obtain good bounds. 


We will make use of the following results. First, the following tools from the probabilistic method will be used throughout, see \cite{AS} for a reference. \\

\noindent\textbf{Chernoff Bound} If $X$ is a binomial random variable and $0 < c \leq 1$, then
\[\mathrm{Pr}\left(|X-\mathbb{E}(X)| \geq c\,\mathbb{E}(X)\right) \leq 2\exp\left(\frac{-c^2\,\mathbb{E}(X)}{3}\right).\]

\noindent\textbf{Markov's inequality} If $X$ is a non-negative random variable and $a > 0$ then 
\[\mathrm{Pr}(X \geq a) \leq \frac{\mathbb{E}(X)}{a}.\]

Before we state the result, observe that we can obviously expect $\alpha(\cH_q(p))$ to vary as $p$ changes. When $p$ is small, we expect to see few collinear triples since $\cH_q(p)$ is sparse and hence close to an empty hypergraph. On the other hand, when $p$ is large, the hypergraph $\cH_q(p)$ resembles the original hypergraph $\cH_q$ closely and so we expect $\alpha(\cH_q(p))$ to be the same as $\alpha(\cH_q)$, appropriately scaled by $p$. Interestingly enough, there is a `flat' range where $\alpha(\cH_q(p)) = \widetilde{\Theta}(q^{(r-1)/2})$, suppressing powers of logarithms, where the answer seems to be independent of $p$. This phenomenon has been observed in other related problems as well \cite{CLNZ,SpiroNie}. We write $f \ll g$ for two real non-negative functions if $f = o(g)$.

\begin{theorem}\label{thm:randomsubsets}
	Asymptotically almost surely (a.a.s.), that is, with probability tending to $1$ as $q \to \infty$, we have
	\begin{align*}
		\alpha(\cH_q(p)) =
		\begin{cases}
		(1+o(1))pq^r & q^{-r} \ll p \ll q^{-(r+1)/2} \\
		O(q^{(r-1)/2}\ln^2 q) & q^{-(r+1)/2} \leq p \leq q^{-(r-1)/2}\ln^2 q \\
		O(pq^{r-1}) & q^{-(r-1)/2}\ln^2 q \leq p \leq 1.
	\end{cases}
	\end{align*}

	Moreover, we have a.a.s. lower bounds
	\begin{align*}
		\alpha(\cH_q(p)) =
		\begin{cases}
			\Omega(q^{(r-1)/2}/\ln q) & q^{-(r+1)/2}/\ln q \leq p \leq 1, \\
			\Omega(pq^{r-1}) & q^{-(r-1)/2}\ln q \ll p \leq 1 \text{ when } r \in \{2,3\}.
		\end{cases}
	\end{align*}
\end{theorem}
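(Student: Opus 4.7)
The plan is to establish the three upper bounds regime-by-regime and the two lower bounds separately, using a direct alteration argument in the sparsest case, the container theorem (\Cref{thm:containersHq}) in the two denser upper-bound regimes, and a mix of alteration and explicit constructions for the lower bounds.

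For the first upper bound ($q^{-r} \ll p \ll q^{-(r+1)/2}$), the hypergraph $\cH_q(p)$ is essentially triple-free. By Chernoff, $|V(\cH_q(p))| = (1+o(1))pq^r$ a.a.s., while Markov applied to the number of triples $T$, whose expectation $p^3 \cdot \Theta(q^{2r+1})$ is $o(pq^r)$ precisely when $p^2 q^{r+1} = o(1)$, yields $T = o(pq^r)$ a.a.s. The upper bound $\alpha(\cH_q(p)) \leq |V(\cH_q(p))|$ is immediate, and the matching (implicit) lower bound follows by deleting one vertex per triple. For the two remaining upper bounds I would apply \Cref{thm:containersHq}: every cap in $\cH_q(p)$ is also a cap in $\cH_q$, so it sits inside some container $C$ with $|C| \leq 4\qbinom{r}{1} = O(q^{r-1})$ by \Cref{cor:edgestosize}, giving $\alpha(\cH_q(p)) \leq \max_{C \in \cC} |C \cap V(\cH_q(p))|$.

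In Case 3 ($p \geq q^{-(r-1)/2}\ln^2 q$), the mean $p|C| = O(pq^{r-1})$ is large enough that Chernoff gives $|C \cap V(\cH_q(p))| \leq 2p|C|$ with failure probability $\exp(-\Omega(pq^{r-1})) \leq \exp(-\Omega(q^{(r-1)/2}\ln^2 q))$, which beats the container count $|\cC| \leq \exp(c_2 q^{(r-1)/2}\ln^2 q)$ under a union bound. For Case 2 ($q^{-(r+1)/2} \leq p \leq q^{-(r-1)/2}\ln^2 q$), the mean $p|C|$ is too small for this Chernoff estimate, so I would use the combinatorial tail
\[\Pr[|C \cap V(\cH_q(p))| \geq m] \leq \binom{|C|}{m}p^m \leq \left(\frac{ep|C|}{m}\right)^m\]
with $m = K q^{(r-1)/2}\ln^2 q$ for a large constant $K$. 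Since $p|C| = O(q^{(r-1)/2}\ln^2 q)$ throughout Case 2, the ratio $m/(ep|C|)$ is at least a large constant, so $m\ln(m/(ep|C|)) \geq 2c_2 q^{(r-1)/2}\ln^2 q$, and the union bound closes.

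For the first lower bound $\Omega(q^{(r-1)/2}/\ln q)$ when $p \geq q^{-(r+1)/2}/\ln q$, I would split at $p \sim q^{-(r+1)/2}$. For smaller $p$ the alteration of Case 1 still gives $\mathbb{E}[|V(\cH_q(p))| - T] = pq^r(1 - O(p^2 q^{r+1})) = (1 - O(\ln^{-2} q))pq^r \geq \Omega(q^{(r-1)/2}/\ln q)$, with standard concentration. For larger $p$, the vertex-per-triple alteration is no longer productive, but the classical probabilistic lower bound $\alpha(H) \gtrsim v(H)/\sqrt{d(H)}$ for $3$-uniform hypergraphs (obtained by subsampling at probability $\sqrt{v(H)/(3e(H))}$ and deleting one vertex per surviving triple) applied a.a.s.\ to $\cH_q(p)$, whose parameters are $v \sim pq^r$ and $d \sim p^2 q^{r+1}$, yields $\alpha(\cH_q(p)) \gtrsim q^{(r-1)/2}$. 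For the second lower bound $\Omega(pq^{r-1})$ when $r \in \{2,3\}$ and $p \gg q^{-(r-1)/2}\ln q$, I would take an explicit cap $A \subset \PG(r,q)$ of size $\Theta(q^{r-1})$ (a conic in $\PG(2,q)$, an elliptic quadric in $\PG(3,q)$); Chernoff applied to the binomial $|A \cap V(\cH_q(p))|$, which has mean $p|A| \gg \ln q$, yields a cap of size $(1+o(1))p|A| = \Omega(pq^{r-1})$.

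The main obstacle is Case 2 of the upper bound, where the expected per-container intersection is smaller than $\ln|\cC|$ and ordinary Chernoff is too weak to survive the union bound; the binomial tail above must be deployed with a careful choice of $m$. The sharp parameters of \Cref{thm:containersHq} --- containers of size $O(q^{r-1})$ and count $\exp(O(q^{(r-1)/2}\ln^2 q))$ --- are precisely calibrated so that the relevant scales meet at $q^{(r-1)/2}\ln^2 q$, producing the flat intermediate bound of the theorem.
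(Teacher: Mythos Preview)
Your upper-bound arguments are essentially the paper's proof in slightly different clothing. The paper also counts, via Markov, the expected number of independent sets of size $m$ as $\sum_{C\in\cC} p^m\binom{|C|}{m}\le |\cC|(8epq^{r-1}/m)^m$, which is exactly your binomial-tail union bound rewritten; your Case~2 and Case~3 calculations are therefore identical to the paper's. Two small quibbles: (i) in Case~3 you invoke Chernoff to get $|C\cap V(\cH_q(p))|\le 2p|C|$ with failure $\exp(-\Omega(pq^{r-1}))$, but the failure is really $\exp(-\Omega(p|C|))$, which is useless for small containers---you should target $Kpq^{r-1}$ directly via the binomial tail $\binom{|C|}{m}p^m$, exactly as you do in Case~2; (ii) the paper handles Case~2 more economically by noting that $\mathbb{E}(X_m(p))$ is monotone in $p$ and reducing to the endpoint $p=q^{-(r-1)/2}\ln^2 q$, rather than redoing the estimate.

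The genuine methodological difference is in the first lower bound. The paper's argument is a single sentence: $\alpha(\cH_q(p))$ is stochastically monotone in $p$, so for $p\ge q^{-(r+1)/2}/\ln q$ one has $\alpha(\cH_q(p))\ge \alpha(\cH_q(q^{-(r+1)/2}/\ln q))=(1+o(1))q^{(r-1)/2}/\ln q$ by the already-established Case~1. Your two-phase approach (alteration below $q^{-(r+1)/2}$, then the Spencer-type bound $\alpha\gtrsim v/\sqrt{d}$ above) is valid in spirit and in fact yields the slightly stronger $\Omega(q^{(r-1)/2})$ for $p\ge q^{-(r+1)/2}$, but as written it has a gap: your claim that alteration gives $(1-O(\ln^{-2}q))pq^r$ presumes $p^2q^{r+1}=O(\ln^{-2}q)$, which fails for $p$ near $q^{-(r+1)/2}$, so neither sub-argument covers the strip $q^{-(r+1)/2}/C\le p\le q^{-(r+1)/2}$ without an extra constant adjustment. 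Monotonicity sidesteps this entirely and is what the paper uses. Your second lower bound (sampling an explicit maximal cap and applying Chernoff) is exactly the paper's argument.
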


\begin{proof}
	\textbf{Bounds for small $p$.} Clearly $\alpha(\cH_q(p)) \leq v(\cH_q(p))$ always holds. Moreover, if we delete one vertex from every edge of $\cH_q(p)$, we get an independent set. Hence, if $X$ is the random variable $e(\cH_q(p))$, i.e.\ the number of edges of $\cH_q(p)$ then $\alpha(\cH_q(p)) \geq v(\cH_q(p))-X$. It remains to show that a.a.s.\ $v(\cH_q(p)) = (1+o(1))pq^r$ and $X = o(pq^r)$. 
	
	Observe that $\mathbb{E}(v(\cH_q(p))) = p\qbinom{r+1}{1} = (1+o(1))pq^r$ and since $p \gg q^{-r}$, it follows by Chernoff's bound that a.a.s.\ $v(\cH_q(p)) = (1+o(1))pq^r$. On the other hand $\mathbb{E}(X) = p^3e(\cH_q) = (1+o(1))p^3q^{2r+1}= o(pq^r)$, as $p \ll q^{-(r+1)/2}$. Hence for every $\varepsilon > 0$ we have by Markov's inequality
	\[\mathrm{Pr}(X \geq \varepsilon pq^r) \leq \frac{(1+o(1))p^3q^{2r+1}}{\varepsilon pq^r} \rightarrow 0 \text{ as } q\to\infty.\] 
	And hence we conclude that $X = o(pq^r)$. \\
	
	\textbf{Upper bound for large $p$.} 
	 Let $\cC$ be the family of containers obtained in \Cref{thm:containersHq}, so that $\ln|\cC| \leq c_2q^{(r-1)/2}\ln^2 q$ for some constant $c_2$. Using \Cref{cor:edgestosize}, we see that $|C| \leq 4\qbinom{r}{1}$ for all $C \in \cC$. Denote by $X_i(p)$ the random variable that counts the number of independent sets of size $i$ in $\cH_q(p)$, and set $m = C_0pq^{r-1}$ for some constant $C_0$ to be determined later. Since every independent set is contained in some element of $\cC$, we find by Markov's inequality that

	\begin{align*}
	\mathrm{Pr}(X_m(p) \geq 1) \leq \mathbb{E}(X_m(p)) &\leq \sum_{C \in \cC}p^m\binom{|C|}{m} \\ 
	&\leq |\cC|p^m\binom{8q^{r-1}}{m} \\
	&\leq\exp(c_2q^{(r-1)/2}\ln^2q)\left(\frac{8epq^{r-1}}{m}\right)^m,
	\end{align*}
	where we used $4\qbinom{r}{1} \leq 8q^{r-1}$ for $q$ large enough in the second inequality, and $\binom{a}{b} \leq (ea/b)^b$ in the third.
	Substituting $m = C_0pq^{r-1}$ and using $q^{-(r-1)/2}\ln^2q \leq p$, we conclude that for sufficiently large $C_0$ we have $\mathrm{Pr}(X_m(p) \geq 1) \rightarrow 0$ as $q \rightarrow \infty$. \\

	\textbf{Upper bound for medium $p$.} Now suppose that $q^{-(r+1)/2} \leq p \leq q^{-(r-1)/2}\ln^2 q$, and let $m := C_0q^{(r-1)/2}\ln^2 q$ for some constant $C_0$ to be determined later. By Markov's inequality and the fact that $\mathbb{E}(X_m(p))$ is non-decreasing in $p$, we find that 
	\begin{align*}
	\mathrm{Pr}(X_m(p) \geq 1) \leq \mathbb{E}(X_m(p)) \leq \mathbb{E}(X_m(q^{-(r-1)/2}\ln^2 q)).
	\end{align*}
	
	With the same calculation as before, but now for these values of $m$ and $p$, we find again that $\mathrm{Pr}(X_m(p) \geq 1) \to 0$ as $q \to \infty$ in this range for $p$. \\
	
	\textbf{Lower bound for medium-large $p$.} 
	
	From the monotonicity of $\alpha(\cH_q(p))$ we find for the range $q^{-(r+1)/2}/\ln q \leq p \leq 1$ that a.a.s.
	\[\alpha(\cH_q(p)) \geq \alpha(\cH_q(q^{-(r+1)/2}/\ln q)) = (1+o(1))q^{(r-1)/2}/\ln q,\]
	
	as we already deduced that $\alpha(\cH_q(p)) = (1+o(1))pq^r$ a.a.s.\ for $p = q^{-(r+1)/2}/\ln q$.
	
	In the special case when $r \in \{2,3\}$, we have that $\alpha(\cH_q) = \Theta(q^{r-1})$. Sampling from an independent set of largest possible size, we see that by the Chernoff bound it follows that a.a.s.\ $\alpha(\cH_q(p)) = \Omega(pq^{r-1})$.
\end{proof}

\subsection{Second application: the number of caps in $\PG(3,q)$}

As a second application of the hypergraph container lemma, we will count the number of caps of size $m$ in $\PG(3,q)$ for sufficiently large $m$. A similar count in $\PG(2,q)$ was obtained before \cite{Bhowmick/Roche-Newton,Roche-Newton/Warren}. Even though the method works in general dimension, we focus on the case $r=3$ since this is the only known dimension, other than $r=2$, for which we have an asymptotically matching lower bound. As mentioned earlier, it is a major open problem to prove or disprove the existence of caps of size $\Omega(q^{r-1})$ in $\PG(r,q)$ for $r \geq 4$. 

In order to obtain precise bounds of the form $\binom{(1+o(1))q^2}{m}$ for sufficiently large $m$, we will need to further reduce the size of the containers in \Cref{thm:containersHq}. Fortunately, we only need a constant number of iterations to do this, so we do not need the full strength of the iterated container theorem (\Cref{thm:saxtonthomason}). Instead, we will use \cite[Corollary 1.3]{ST16}, which we again state only for $3$-uniform hypergraphs.

\begin{theorem}\label{thm:saxtonthomason2}
	Let $\cH$ be a hypergraph on $n$ vertices of average degree $d(\cH)$ and $\varepsilon > 0$. Then there is a constant $c = c(\varepsilon)$ for which the following holds. Let $0 <\tau \leq 1$ be chosen so that
	\[\Delta_2(\cH) \leq c\tau d(\cH) \hspace{1cm} \text{and} \hspace{1cm} 1 \leq c\tau^2 d(\cH).\]
	Then there exists a collection $\cC \subset \cP(V(\cH))$ and a constant $c_0$ such that
	\begin{enumerate}
		\item for every independent set $I$ there exists some $C\in \cC$ such that $I \subset C$,
		\item $e(\cH[C]) \leq \varepsilon e(\cH)$,
		\item $\ln |\cC| \leq c_0\tau n \ln(1/\tau)$.
	\end{enumerate}
\end{theorem}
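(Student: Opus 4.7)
The plan is to treat this as a standard hypergraph container statement and follow the algorithmic approach of Saxton and Thomason. The main idea is to construct, for each independent set $I$, a small \emph{fingerprint} $F(I) \subseteq I$ together with a \emph{container} $C(I) \supseteq I$, in such a way that $C(I)$ depends only on $F(I)$ and on $\cH$, not on the remainder of $I$. The family $\cC$ is then the set of containers that arise as $I$ ranges over all independent sets, and $|\cC|$ is bounded by the number of distinct fingerprints.

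The construction is algorithmic. Fix once and for all an arbitrary total order on $V(\cH)$. Given $I$, initialise $A := V(\cH)$ and $F := \emptyset$, and repeat the following while $e(\cH[A]) > \varepsilon e(\cH)$: locate the vertex $v \in A$ of largest degree in $\cH[A]$, breaking ties by the fixed order; if $v \in I$, adjoin $v$ to $F$ and delete from $A$ every vertex whose codegree with $v$ in $\cH[A]$ exceeds a threshold proportional to $\tau d(\cH[A])$; if $v \notin I$, simply delete $v$ from $A$. Upon termination, set $C(I) := A \cup F$. Vertices added to $F$ all lie in $I$, and every vertex ever removed from $A$ either fails to lie in $I$ or has been placed in $F$, so $I \subseteq C(I)$. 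The decisions made after $v$ is selected are determined by $\cH$, the current $A$, and $F$, so replaying the procedure from $F(I)$ alone reconstructs $C(I)$.

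For the analysis, the hypothesis $1 \leq c\tau^2 d(\cH)$ ensures that whenever $e(\cH[A])$ is still large compared to $\varepsilon e(\cH)$, the chosen vertex $v$ has degree substantially above $\tau d(\cH[A])$, so removing the high-codegree vertices strips away a definite fraction of the remaining edges. The hypothesis $\Delta_2(\cH) \leq c\tau d(\cH)$ bounds the average codegree, which controls both the number of vertices removed per step and the overlap structure of potential fingerprint vertices; an amortised argument then shows the loop halts after at most $c_0 \tau n$ additions to $F$, giving properties (1) and (2). The count in (3) follows from $|\cC| \leq \binom{n}{\leq c_0\tau n}$ and the standard estimate $\binom{n}{k} \leq (en/k)^k$, which yields $\ln|\cC| \leq c_0 \tau n \ln(1/\tau)$ after rescaling $c_0$.

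The main obstacle is the delicate calibration of the codegree threshold used for deletions. It must be large enough that the total number of deletions across the run is $O(n)$ (so the algorithm actually processes all of $V(\cH)$), yet small enough that the residual edge count drops below $\varepsilon e(\cH)$ before too many steps force additions to $F$. Both hypotheses on $\tau$ enter simultaneously at precisely this balancing step, and this is the technical heart of the Saxton–Thomason argument. Since the present paper uses the result only as a black box and the full proof is carried out in \cite{ST16}, I would prove the theorem by appealing to that reference rather than reproducing the amortised analysis in detail.
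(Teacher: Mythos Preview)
Your proposal is correct and matches the paper's treatment: the paper does not prove this theorem at all but simply quotes it as \cite[Corollary~1.3]{ST16} and uses it as a black box, exactly as you conclude in your final paragraph. The algorithmic sketch you give is a reasonable high-level description of the Saxton--Thomason container method, but the paper supplies none of it.
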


We will now apply this theorem to the containers obtained in \Cref{thm:containersHq} iteratively until we obtain a set of containers $\cC_2$ with the property that each container $C$ satisfies $|C| \leq (1+\gamma)\qbinom{3}{1}$ for some $0<\gamma \leq 1$. 

\begin{theorem}\label{thm:containersHq2}
	Let $\cH_q$ be the 3-uniform hypergraph as defined before. For all $0<\gamma \leq 1/10$ and sufficiently large $q$ we have a family $\cC$ of subsets of $V(\cH)$ satisfying the following properties:
	\begin{itemize}
		\item for every cap $I$ in $\PG(3,q)$, there exists some $C \in \cC$ such that $I \subset C$,
		\item $|C| \leq (1+\gamma)\qbinom{3}{1}$ for all $C \in \cC$,
		\item $\ln|\cC| \leq cq\ln^2 q + c\gamma^{-1} q\ln q$ for some absolute constant $c >0$.
	\end{itemize}
\end{theorem}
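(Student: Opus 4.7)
The plan is to use Theorem~\ref{thm:containersHq} as a first step and then apply Theorem~\ref{thm:saxtonthomason2} exactly once to each resulting container to refine it down to size $(1+\gamma)\qbinom{3}{1}$. Let $\cC_1$ be the family provided by Theorem~\ref{thm:containersHq} with $r=3$: every cap is contained in some $C\in\cC_1$, each such $C$ satisfies $e(\cH_q[C])\le 4\qbinom{3}{1}^2$ (hence $|C|\le 4\qbinom{3}{1}$ by Corollary~\ref{cor:edgestosize}), and $\ln|\cC_1|\le c_2\, q\ln^2 q$ for some absolute $c_2$.

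Fix $C\in\cC_1$. If $|C|\le (1+\gamma)\qbinom{3}{1}$, leave $C$ alone; otherwise Lemma~\ref{lem:manytriples} applied with $k=1+\gamma$ yields $e(\cH_q[C])\ge \frac{(1+\gamma)^2\gamma}{6}\qbinom{3}{1}^2-\frac{(1+\gamma)\gamma}{3}\qbinom{3}{1}$, which is of order $\gamma q^4$ for large $q$. Consequently $d(\cH_q[C])=3e(\cH_q[C])/|C|=\Omega(\gamma q^2)$, and since $\Delta_2(\cH_q[C])\le q-1$, the choice $\tau=C_1/(\gamma q)$ for a sufficiently large absolute constant $C_1$ satisfies both $\Delta_2\le c\tau d$ and $1\le c\tau^2 d$, the hypotheses of Theorem~\ref{thm:saxtonthomason2}. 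Apply that theorem with $\varepsilon=\gamma/24$ to $\cH_q[C]$ to obtain a collection $\cC_C$ of subsets of $C$ such that (i) every cap inside $C$ is contained in some $C'\in\cC_C$, (ii) $e(\cH_q[C'])\le \varepsilon\cdot 4\qbinom{3}{1}^2 = \frac{\gamma}{6}\qbinom{3}{1}^2$, and (iii) $\ln|\cC_C|\le c_0\tau|C|\ln(1/\tau)=O(\gamma^{-1} q\ln q)$, using $|C|=O(q^2)$ and $\ln(1/\tau)=O(\ln q)$.

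For $q$ large, the bound $\frac{\gamma}{6}\qbinom{3}{1}^2$ lies below the threshold $\frac{(1+\gamma)^2\gamma}{6}\qbinom{3}{1}^2-\frac{(1+\gamma)\gamma}{3}\qbinom{3}{1}$ in Corollary~\ref{cor:edgestosize} with $k=1+\gamma$, so every $C'\in\cC_C$ satisfies $|C'|\le (1+\gamma)\qbinom{3}{1}$. Taking $\cC$ to be the union over $C\in\cC_1$ of $\cC_C$ (or $\{C\}$ when no refinement was needed) gives the required family: the containment of caps is transitive through the two steps, and
\[\ln|\cC|\le \ln|\cC_1|+\max_{C\in\cC_1}\ln|\cC_C|\le c\,q\ln^2 q + c\gamma^{-1}q\ln q\]
for an absolute constant $c$.

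The main technical point requiring care is that the constants $c(\varepsilon)$ and $c_0$ in Theorem~\ref{thm:saxtonthomason2} depend on $\varepsilon=\gamma/24$. However, since $\gamma\le 1/10$ means $\varepsilon$ is bounded above by a small absolute constant and is not required to vanish with $q$, these $\varepsilon$-dependent constants can be absorbed into the final absolute constant $c$. All the genuine $\gamma$-dependence in the theorem's bound is carried by the choice $\tau\sim (\gamma q)^{-1}$, which is exactly what produces the $\gamma^{-1}$ factor in the stated estimate; no iteration beyond the first refinement is needed, because a single application of Theorem~\ref{thm:saxtonthomason2} with $\varepsilon=\Theta(\gamma)$ already reduces the edge count to the supersaturation threshold for $k=1+\gamma$.
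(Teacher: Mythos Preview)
Your proof is essentially identical to the paper's: both start from the containers of Theorem~\ref{thm:containersHq}, apply Theorem~\ref{thm:saxtonthomason2} once to each surviving container with $\varepsilon=\Theta(\gamma)$ and $\tau=\Theta((\gamma q)^{-1})$, and then invoke Corollary~\ref{cor:edgestosize} to pass from the resulting edge bound to the size bound $(1+\gamma)\qbinom{3}{1}$; the paper's specific choices $\varepsilon=(1+\gamma)^2\gamma/48$ and case split on $e(\cH_q[C])$ rather than on $|C|$ are cosmetic. One quibble: your closing justification that the $\varepsilon$-dependent constants of Theorem~\ref{thm:saxtonthomason2} can be absorbed because $\varepsilon$ is ``bounded above'' misidentifies the issue---the concern is $\varepsilon\to 0$ as $\gamma\to 0$, not $\varepsilon$ being large---though the paper's own proof is equally informal on exactly this point.
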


\begin{proof}
	Let $\cC$ be the set of containers for $\cH_q$ obtained in \Cref{thm:containersHq}. 
	Note that $\ln|\cC|\leq c_2q\ln^2 q$ and that for every container $C\in \cC$, $e(\cH[C]\leq 4\qbinom{3}{1}^2$, so \Cref{cor:edgestosize} implies that $|C|\leq 4\qbinom{3}{1}$.
	
	We will apply \Cref{thm:saxtonthomason2} to any container $C \in \cC$ for which $e(\cH_q[C]) \geq \frac{(1+\gamma)^2\gamma}{12}\qbinom{3}{1}^2$ to obtain a new set $\cC_2$ of containers. Otherwise, set $\cC_2 = \{C\}$.
	 In the latter case, since $\frac{(1+\gamma)^2\gamma}{12} \qbinom{3}{1}^2<\frac{(1+\gamma)^2\gamma}{6}\qbinom{3}{1}^2-\frac{\gamma(1+\gamma)}{3}\qbinom{3}{1}$, \Cref{cor:edgestosize}  implies that $|C| \leq (1+\gamma)\qbinom{3}{1}$.
	
	For any $0<\gamma < 1/10$ we will apply \Cref{thm:saxtonthomason2} with $\varepsilon = (1+\gamma)^2\gamma/48$. 
	As before, $\Delta_2(\cH_q) \leq q$ and $$d(\cH_q[C])=\frac{3 e(\cH[C])}{|C|}\geq \frac{3 \frac{(1+\gamma)^2\gamma\qbinom{3}{1}^2}{12}}{4\qbinom{3}{1}}\geq \frac{(1+\gamma)^2\gamma}{16}\qbinom{3}{1}.$$ Hence, it would be sufficient to choose $0 < \tau \leq 1$ such that 
	\[q \leq c\tau {\frac{(1+\gamma)^2\gamma}{16}}\qbinom{3}{1} \hspace{.5cm} \text{and} \hspace{.5cm} 1 \leq c\tau^2 {\frac{(1+\gamma)^2\gamma}{16}}\qbinom{3}{1} ,\]
	where $c = c(\varepsilon)$, with $\varepsilon=(1+\gamma)^2\gamma/48$, given by \Cref{thm:saxtonthomason2}. Setting $\tau = \tilde{c} (1+\gamma)^{-2}\gamma^{-1}q^{-1}$, and using that $\gamma<1/10$, we find that both conditions are satisfied for a sufficiently large constant $\tilde{c}$. We conclude the existence of a set of containers $\cC_C$ in $\cH_q[C]$ such that $|C_2| \leq (1+\gamma)^2\gamma\qbinom{3}{1}$ for all $C_2 \in \cC_C$ and $\ln|\cC_2| \leq c'\gamma^{-1} q\ln(q)$ for some constant $c'$, using that $0 < \gamma < 1/10$.
		
	This means that the total number of containers $\cC_2 := \bigcup_{C \in \cC} \cC_C$ satisfies
	\[\ln|\cC_2| \leq c_2q\ln^2 q + c'\gamma^{-1} q\ln(q).\]
	We can replace the constants by $\max\{c_2,c'\}$ to make them equal.
	We find that $e(\cH_q[C_2]) \leq \frac{(1+\gamma)^2\gamma}{12} \qbinom{3}{1}^2$ for all $C_2 \in \cC_2$. As before, it follows from \Cref{thm:containersHq} that $|C_2| \leq (1+\gamma)\qbinom{3}{1}$.		

\end{proof}

\begin{cor}\label{cor:countcaps}
	The number of caps in $\PG(3,q)$ of size $m \geq cq\ln^3q$ for some constant $c > 0$ is at most $\binom{(1+o(1))q^2}{m}$ as $q \to \infty$.
\end{cor}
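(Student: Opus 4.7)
The plan is to use \Cref{thm:containersHq2} as a black box. It gives us, for any parameter $0<\gamma\le 1/10$, a container family $\cC$ such that every cap is a subset of some $C\in\cC$, each $|C|\le (1+\gamma)\qbinom{3}{1}$, and $\ln|\cC|\le cq\ln^2 q+c\gamma^{-1}q\ln q$. Since every cap of size $m$ is a subset of at least one container, a union bound shows that the number of such caps is at most
\[
|\cC|\cdot\binom{(1+\gamma)\qbinom{3}{1}}{m}.
\]
The goal is to pick $\gamma$ and a threshold for $m$ so that both $|\cC|$ and the factor $1+\gamma$ can be absorbed into the binomial coefficient, yielding a bound of the form $\binom{(1+o(1))q^2}{m}$.

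The key absorption tool is the elementary inequality $k\binom{a}{m}\le \binom{k^{1/m}a}{m}$ for $k\ge 1$, which follows from the termwise estimate $(ca-i)/(a-i)\ge c$ for $c\ge 1$ and $i\ge 0$ (applied with $c=k^{1/m}$). Applying this with $k=|\cC|$, the count becomes at most
\[
\binom{|\cC|^{1/m}(1+\gamma)\qbinom{3}{1}}{m}.
\]
Since $\qbinom{3}{1}=(1+o(1))q^2$, it suffices to arrange that both $1+\gamma=1+o(1)$ and $|\cC|^{1/m}=1+o(1)$ as $q\to\infty$.

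To balance the two terms in $\ln|\cC|$, I would set $\gamma:=(\ln q)^{-1}$, so that $\gamma^{-1}=\ln q$ and hence $\ln|\cC|\le 2cq\ln^2 q$ for large $q$. Then for $m\ge c'q\ln^3 q$ with $c'$ chosen sufficiently large relative to the absolute constant $c$ coming out of \Cref{thm:containersHq2}, we get
\[
\frac{\ln|\cC|}{m}\le \frac{2c}{c'\ln q}=o(1),
\]
so $|\cC|^{1/m}=\exp(\ln|\cC|/m)=1+o(1)$. Combining with $(1+\gamma)=1+o(1)$ and $\qbinom{3}{1}=(1+o(1))q^2$, the argument of the binomial coefficient is $(1+o(1))q^2$, which is exactly what is claimed. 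Note that $m\le q^2+1$ (the bound on cap sizes in $\PG(3,q)$), so the binomial is well defined.

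There is no real obstacle beyond checking the arithmetic: \Cref{thm:containersHq2} already carries out the genuine work of producing small containers, and the remaining argument is purely a book-keeping exercise where the non-trivial point is the balanced choice $\gamma=(\ln q)^{-1}$ that makes the two contributions to $\ln|\cC|$ comparable, thereby dictating the $q\ln^3 q$ threshold on $m$ in the statement.
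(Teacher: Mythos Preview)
Your proof is correct and follows essentially the same route as the paper: both set $\gamma=(\ln q)^{-1}$, invoke \Cref{thm:containersHq2} to get $\ln|\cC|=O(q\ln^2 q)$ and $|C|\le(1+\gamma)\qbinom{3}{1}$, and then absorb $|\cC|$ into the binomial via the inequality $c^m\binom{a}{m}\le\binom{ca}{m}$ (equivalently your $k\binom{a}{m}\le\binom{k^{1/m}a}{m}$). The only cosmetic difference is that the paper first bounds $|\cC|\le(1+\gamma)^m$ before absorbing, whereas you absorb $|\cC|^{1/m}=1+o(1)$ directly; the underlying computation is identical.
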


\begin{proof}
	
	Putting $\gamma = (\ln q)^{-1}$, we find for sufficiently large $q$ by \Cref{thm:containersHq2} a collection of containers $\cC$ such that $|C| \leq (1+\gamma)\qbinom{3}{1}$ and $\ln|\cC| \leq c_0 q\ln^2q$ for some constant $c_0 > 0$. Since every independent set in $\cH_q$, i.e.\ a cap in $\PG(3,q)$, is contained in a container, we find that the number of caps of size $m \geq 2c_0q\ln^3q$  in $\PG(3,q)$ is at most
	\begin{align*}
		\sum_{C \in \cC}\binom{|C|}{m} &\leq \exp({c_0q\ln^2q})\binom{(1+\gamma)\qbinom{3}{1}}{m} \\
		&\leq \exp\left({\frac{m\gamma}{2}}\right)\binom{(1+\gamma)\qbinom{3}{1}}{m} \\
		&\leq (1+\gamma)^m\binom{(1+\gamma)\qbinom{3}{1}}{m} \\
		&\leq \binom{(1+\gamma)^2\qbinom{3}{1}}{m},
	\end{align*}
	as $\gamma/2 \leq \ln(1+\gamma)$ for sufficiently large $q$. Finally, put $2c_0=c$ and observe that $(1+1/\ln q)\qbinom{3}{1} = (1+o(1))q^2$ as $q \to \infty$.
\end{proof}


\bibliographystyle{alpha}

\end{document}